\definecolor{shadecolor}{rgb}{1,0.8,0.3}
\title{\textbf{On Asymptotic Behaviors of Stepwise Multiple Testing Procedures}}
\date{}
\author{Monitirtha Dey \\
        Indian Statistical Institute, Kolkata \\
        \textit{\href{monitirtha.d\_r@isical.ac.in}{monitirtha.d\_r@isical.ac.in}, 
        \href{monitirthadey3@gmail.com}{monitirthadey3@gmail.com}}}
\begin{document}
\bibliographystyle{plainnat}

\maketitle
\theoremstyle{plain}
\newtheorem{axiom}{Axiom}
\newtheorem{remark}{Remark}
\newtheorem{corollary}{Corollary}[section]
\newtheorem{claim}[axiom]{Claim}
\newtheorem{theorem}{Theorem}[section]
\newtheorem{lemma}{Lemma}[section]

\newaliascnt{lemmaa}{theorem}
\newtheorem{lemmaa}[lemmaa]{Theorem}
\aliascntresetthe{lemmaa}
\providecommand*{\lemmaaautorefname}{Theorem}



\theoremstyle{plain}
\newtheorem{exa}{Example}
\newtheorem{rem}{Remark}
\newtheorem{proposition}{Proposition}

\theoremstyle{definition}
\newtheorem{definition}{Definition}
\newtheorem{example}{Example}







\begin{abstract}
Stepwise multiple testing procedures have attracted several statisticians for decades and are also quite popular with statistics users because of their technical simplicity. The Bonferroni procedure has been one of the earliest and most prominent testing rules for controlling the familywise error rate (FWER). A recent article established that the FWER for the Bonferroni method asymptotically (i.e., when the number of hypotheses becomes arbitrarily large) approaches zero under any positively equicorrelated multivariate normal framework. However, similar results for the limiting behaviors of FWER of general stepwise procedures are nonexistent. The present work addresses this gap in a unified manner by studying the limiting behaviors of the FWER of several stepwise testing rules for correlated normal setups. Specifically, we show that the limiting FWER approaches zero for any step-down rule (e.g., Holm's method) provided the infimum of the correlations is strictly positive.
We also establish similar limiting zero results on FWER of other popular multiple testing rules, e.g., Hochberg's and Hommel's procedures. We then extend these results to any configuration of true and false null hypotheses. It turns out that, within our chosen asymptotic framework, the Benjamini-Hochberg method can hold the FWER at a strictly positive level asymptotically under the equicorrelated normality. We finally discuss the limiting powers of various procedures.
\end{abstract}

\noindent \textbf{\textit{Keywords.}} 
Familywise error rate,
Multiple testing under dependence,
Stepwise Procedures,
Benjamini-Hochberg Method,
Holm's Method,
Hommel's Procedure.

\vspace{2mm}
\noindent \textbf{\textit{MSC 2020 Classification.}} 62J15, 62F03.

\section{Introduction\label{sec:1}}

 
Large-scale multiple testing problems arising in various scientific disciplines often study correlated variables simultaneously. For example, in microRNA expression data, several genes may cluster into groups through
their transcription processes and possess high correlations. The data observed from different locations and time periods in public health studies are generally spatially or serially correlated. fMRI studies and multistage clinical trials also involve variables with complex and unknown dependencies. Consequently, the study of the effect of correlation on dependent test statistics in simultaneous inference problems has attracted considerable attention recently. 

Benjamini and Yekutieli \cite{BY} proved that the Benjamini-Hochberg procedure \cite{BH} controls the false discovery rate (FDR) at the desired level under positive regression dependency. Sarkar \cite{Sarkar2002} established some general results on FDR control under dependence. Storey and Tibshirani \cite{StoreyTib} proposed methodologies for estimating the FDR for dependent test statistics. Simultaneous testing methods under dependence have also been studied by Sun and Cai~\cite{suncai}, Efron~\cite{efron2007}, Liu, Zhang and Page~\cite{LZP} among others. Efron~\cite{efron2010} mentions that the correlation penalty on the summary statistics depends on the root mean square (RMS) of the correlations. Efron~\cite{Efron2010book} contains an excellent review of the relevant literature. Finner and Roters \cite{Finner2001a} discussed the behavior of expected type I errors of multiple level-$\alpha$ single-step test procedures based on exchangeable test statistics. They also studied \cite{Finner2001b} asymptotic (i.e., when the number of hypotheses tends to infinity) properties of the supremum of the expected type I error rate (EER) for some FDR-controlling stepwise procedures under independence. Fan et al. \cite{Fan2012} proposed a method of dealing with correlated test statistics with a known covariance structure. They capture the association between correlated statistics using the principal eigenvalues of the covariance matrix. Fan and Han \cite{Fan2017} extended this work when the underlying dependence structure is unknown. Qiu et al. \cite{Qiu} demonstrated that many FDR controlling procedures lose power significantly under dependence. Huang and Hsu \cite{HuangHsu} remark that stepwise decision rules based on modeling of the dependence structure are in general superior to their counterparts that do not consider the correlation.

There is relatively little literature on the performance of FWER controlling procedures under dependence. Das and Bhandari ~\cite{dasbhandari} have established that the Bonferroni FWER is asymptotically a convex function in correlation $\rho$ under the equicorrelated normal framework. Consequently, they show that the Bonferroni FWER is bounded above by $\alpha(1-\rho)$, $\alpha$ being the target level. In their recent article \cite{deybhandari}, Dey and Bhandari have improved this result by showing that the Bonferroni FWER asymptotically goes to zero for any strictly positive $\rho$. They have also extended this to arbitrarily correlated setups where the limiting infimum of the correlations is strictly positive. Dey \cite{deycstm} has obtained upper bounds on the Bonferroni FWER in the equicorrelated and general setups with small and moderate dimensions. Finner and Roters \cite{Finner2002} derived explicit formulas for the distribution of the number of falsely rejected hypotheses in single-step, step-down and step-up methods under the assumption of independent $p$-values. However, the role of correlation on the limiting behavior of the FWER for stepwise procedures is much less explored. 

The present work addresses this problem by theoretically investigating the limiting FWER values of general step-down procedures under the correlated normal setup. These results provide new insights into the behavior of step-down decision procedures. By establishing the limiting performances of commonly used step-up methods, e.g.,  the Benjamini-Hochberg method and the Hochberg method, we have elucidated that the class of step-up procedures does not possess a similar \textit{universal asymptotic zero} result as obtained in the case of step-down procedures. It is also noteworthy that most of our results are quite general since they accommodate any combination of true and false null hypotheses. We have also obtained the limiting powers of the stepwise procedures.

This paper is structured as follows. We first formally introduce the framework with relevant notations and summarize some results on the limiting behavior of the Bonferroni procedure in the next section. Section 3 studies in detail the limiting behaviors of the FWER of step-down procedures in equicorrelated and general normal setups. Section 4 is dedicated to similar results on Hochberg's and Benjamini-Hochberg procedures. Hommel's stepwise procedure is studied in Section 5 while Section 6 illustrates the limiting powers of the stepwise procedures. We outline our contributions and discuss related problems briefly in Section 7.

\section{Preliminaries} 
\subsection{Testing Framework}
Here we discuss the simultaneous inference problem through a \textit{Gaussian sequence model} framework\cite{dasbhandari, deybhandari, deycstm, Finner2001a, FDR2007}:
$$X_{i} {\sim} \mathcal{N}(\mu_{i},1),  \quad i \in \{1, \ldots,n\}$$
where $X_{i}$’s are dependent. The variances are taken to be unity since the literature on the asymptotic multiple testing theory often assumes that the variances are known (see, e.g., \cite{Abramovich, Bogdan, dasbhandari, deybhandari, Donoho}). We are interested in the following multiple testing problem: 
$$H_{0i}:\mu_{i}=0 \quad vs \quad H_{1i}:\mu_{i}>0, \quad 1 \leq i \leq n.$$ The intersection null hypothesis (also called the global null) $ H_{0}=\bigcap_{i=1}^{n} H_{0 i}$ states that each $\mu_{i}$ is zero. Let $\mathcal{A}$ denote the set of indices from $\{1,\ldots,n\}$ for which $H_{0i}$ is true. So, under the global null, $\mathcal{A}$ is $\{1, \ldots, n\}$. Throughout this work,  $\Phi(\cdot)$ denotes the cumulative distribution function of $N(0,1)$ distribution and $\alpha \in (0,1)$ denotes the target level of FWER control.

Let $V_{n}(T)$ and $R_n(T)$ respectively denote the number of type I errors and the number of rejected hypotheses of a multiple testing procedure (MTP henceforth) $T$ and $\alpha$ be the desired level of FWER control. The FWER of procedure $T$ is given by
\begin{equation}
    FWER_{T}(n, \alpha, \Sigma_n)= \mathbb{P}_{\Sigma_{n}}(V_{n}(T)\geq 1)
    \label{defFWER}
\end{equation}
where $\Sigma_{n}$ is the covariance matrix of $(X_1, \ldots, X_n)$. 
It is noteworthy that FWER is \textit{not} the probability of making any type I errors when the global null hypothesis $H_{0}$ is true. A MTP is said to have \textit{weak control} of the FWER if the FWER is less than or equal to the test level under the global null hypothesis. It has  \textit{strong control} of the FWER if the FWER is
less than or equal to the level of the test under any configuration of true and false null hypotheses. In many of our results, we shall consider the probability in the r.h.s of \eqref{defFWER} under the intersection null $H_{0}$ at first (and take that as the definition of FWER) for the sake of technical simplicity. Then we shall extend the results obtained in this case to any combination of true and false null hypotheses.

The present work studies the limiting behaviors of $FWER_T$ for $T$ belonging to a broad class of MTPs under two dependent setups:
\begin{enumerate}
\item The equicorrelated setup:
$$\operatorname{Corr}\left(X_i, X_j\right)=\rho \quad \forall i \neq j \quad(\rho \geq 0).$$
\item The arbitrarily correlated setup:
$$\operatorname{Corr}\left(X_i, X_j\right)=\rho_{ij} \quad \forall i \neq j \quad(\rho_{ij} \geq 0).$$
\end{enumerate}
The equicorrelated setup \cite{Cohen2009, dasbhandari, deybhandari, deycstm, Finner2001a, FDR2007} is the intraclass covariance matrix model, characterizing the exchangeable situation. Although this is a special case of the second one, we are considering them separately since the proof of the result in the general case is based on the corresponding results in the equicorrelated case. 
 The equicorrelated setup also encompasses the problem of comparing a control against several treatments. However, many scientific disciplines involve variables with more  complex dependence structure (e.g., fMRI studies). These complex dependence scenarios need to be tackled with more general covariance matrices \cite{deybhandari, deycstm}. The arbitrarily correlated setup also includes the successive correlation covariance matrix, which covers change point problems \cite{Cohen2009}. 

Throughout this work, $M_{n}(\rho)$ denotes the $n \times n$ matrix with each diagonal entry equal to 1 and each off-diagonal entry equal to $\rho$. Also, $\Sigma_{n}$ denotes the $n \times n$ correlation matrix with $(i,j)$'th entry equal to $\rho_{ij}$, $i \neq j$. 


\subsection{The Bonferroni Procedure}
Let $\mathcal{A}$ denote the set of indices from $\{1,\ldots,n\}$ for which $H_{0i}$ is true.
The classic Bonferroni procedure \cite{Bonferroni} is the best-known and one of the most frequently used MTP for controlling FWER. This single-step method sets the same cut-off for all the hypotheses. In one-sided settings, it rejects $H_{0i}$ if $X_{i}>\Phi ^{-1}(1-\alpha/n) (=c_{\alpha,n}, \text{say})$. So the Bonferroni FWER (for the covariance matrix $\Sigma_n$) is defined by
\begin{equation*}
    FWER_{Bon}(n, \alpha, \Sigma_n) =\mathbb{P}_{\Sigma_n}\left(X_{i}>c_{\alpha,n}\right. \text{for some} \left. i \in \mathcal{A}\right)
=\mathbb{P}_{\Sigma_n}\bigg(\bigcup_{i \in \mathcal{A}}\{X_i > c_{\alpha,n}\}\bigg). 
\end{equation*}
We write $FWER_{T}(n,\alpha, M_n(\rho))$ as $FWER_{T}(n,\alpha, \rho)$ for simpler notation. Das and Bhandari~\cite{dasbhandari} obtain the following in the equicorrelated case:
\begin{theorem}\label{thm2.1}
Given any $\alpha \in (0,1)$ and $\rho \in [0,1]$, $FWER_{Bon}(n, \alpha, \rho)$ is asymptotically bounded by $\alpha(1-\rho)$ under the global null hypothesis.
\end{theorem}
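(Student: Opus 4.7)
The natural starting point is the standard one-factor representation of equicorrelated normals: one can write $X_i = \sqrt{\rho}\, Z + \sqrt{1-\rho}\, Y_i$, where $Z, Y_1, \ldots, Y_n$ are i.i.d.\ $N(0,1)$. Conditioning on $Z$ makes the $X_i$'s independent, which yields the clean representation
$$FWER_{Bon}(n, \alpha, \rho) \;=\; 1 - \mathbb{E}\!\left[\Phi\!\left(\frac{c_{\alpha,n} - \sqrt{\rho}\,Z}{\sqrt{1-\rho}}\right)^{\!n}\right].$$
This is the workhorse formula; the entire argument is built on understanding its asymptotics in $n$ as a function of $\rho$.

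The plan is to show that the pointwise limit $f(\rho) := \lim_{n\to\infty} FWER_{Bon}(n,\alpha,\rho)$ exists on $[0,1]$ and is a \emph{convex} function of $\rho$, and then bound $f$ above by the chord joining its values at the two endpoints. At $\rho = 0$, independence gives $FWER_{Bon}(n,\alpha,0) = 1 - (1-\alpha/n)^n \to 1 - e^{-\alpha}$. At $\rho = 1$, all $X_i$ coincide and $FWER_{Bon}(n,\alpha,1) = \alpha/n \to 0$. Once convexity is granted,
$$f(\rho) \;\le\; (1-\rho)\, f(0) + \rho\, f(1) \;=\; (1-\rho)(1-e^{-\alpha}) \;\le\; \alpha(1-\rho),$$
where the last step uses the elementary inequality $1-e^{-\alpha} \le \alpha$. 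So the endpoint computations and the final comparison to $\alpha(1-\rho)$ are immediate, provided convexity holds.

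The hard part, as expected, is establishing convexity of $f$. Concretely, one needs to (i) show that for each fixed $z$, the integrand $\Phi\!\left(\tfrac{c_{\alpha,n}-\sqrt{\rho}z}{\sqrt{1-\rho}}\right)^{n}$ converges to an explicit limit as $n\to\infty$, and (ii) justify exchanging limit and expectation over $Z$. Step (i) uses $c_{\alpha,n} \sim \sqrt{2\log n}$ and the Mills ratio asymptotics $1-\Phi(x) \sim \phi(x)/x$: writing $n\log \Phi(\cdot) \approx -n\,(1-\Phi(\cdot))$ for arguments going to infinity and carefully tracking the $\rho$-dependent shift inside $\Phi$ gives a closed-form limit whose dependence on $\rho$ can then be checked to be convex via a direct second-derivative computation (or by recognizing it as an expectation of a convex function of $\rho$). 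Step (ii) follows from a standard dominated-convergence argument since the integrand lies in $[0,1]$.

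The main obstacle is therefore the technical verification of convexity, which boils down to an asymptotic analysis of $\Phi(\cdot)^n$ via Mills' ratio together with either an explicit second-derivative check of the limit or a pointwise convexity argument carried through the expectation. Everything else — the endpoint identifications and the final replacement of $(1-e^{-\alpha})$ by $\alpha$ — is routine.
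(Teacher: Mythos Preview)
Your high-level strategy --- the one-factor representation, convexity in $\rho$, and the chord bound between the endpoints $\rho=0$ and $\rho=1$ --- is precisely the Das--Bhandari approach that the paper cites for this theorem. The problem is in the order of operations. You propose to compute the pointwise limit $f(\rho)$ first and then verify its convexity, anticipating a ``closed-form limit whose dependence on $\rho$ can then be checked to be convex via a direct second-derivative computation.'' But if you carry out the Mills-ratio asymptotics honestly for any fixed $\rho\in(0,1)$ and fixed $z$, the argument $(c_{\alpha,n}-\sqrt{\rho}\,z)/\sqrt{1-\rho}$ grows like $c_{\alpha,n}/\sqrt{1-\rho}$ with $1/\sqrt{1-\rho}>1$, so the tail $1-\Phi(\cdot)$ is $o(1/n)$. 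Hence $\Phi(\cdot)^n\to 1$ pointwise and, by dominated convergence, $f(\rho)=0$ for every $\rho\in(0,1]$. There is no non-trivial $\rho$-dependent limit to differentiate: your computation, if completed, simply reproduces Theorem~\ref{thm2.2} (the Dey--Bhandari improvement), after which Theorem~\ref{thm2.1} follows trivially and the whole convexity/chord apparatus is superfluous.

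The original Das--Bhandari argument, as the paper describes it, instead establishes that the \emph{pre-limit} map $\rho\mapsto FWER_{Bon}(n,\alpha,\rho)$ is convex on $[0,1]$ for all large $n$, applies the chord inequality at finite $n$, and only then sends $n\to\infty$ in the endpoint values $FWER_{Bon}(n,\alpha,0)=1-(1-\alpha/n)^n$ and $FWER_{Bon}(n,\alpha,1)=\alpha/n$. If you want to genuinely follow the convexity route rather than inadvertently reprove Theorem~\ref{thm2.2}, you must differentiate the integral representation twice in $\rho$ at finite $n$ and show the second derivative is eventually nonnegative --- that is where the actual technical work of the cited proof lives, and it is not recoverable from the limit alone.
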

Dey and Bhandari \cite{deybhandari} improve this result as follows: 
\begin{theorem}\label{thm2.2}
Given any $\alpha \in (0,1)$ and $\rho \in (0,1]$, we have
$$\lim_{n \to \infty} FWER_{Bon}(n, \alpha, \rho) = 0$$
under any configuration of true and false null hypotheses.
\end{theorem}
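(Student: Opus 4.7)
The plan is to reduce the arbitrary-configuration statement to the global null case and then establish the global null case via the canonical conditional-independence decomposition of the equicorrelated Gaussian vector. For the reduction, observe that under any configuration with true null index set $\mathcal{A}$ of cardinality $n_0 \leq n$, the sub-vector $(X_i)_{i \in \mathcal{A}}$ is $n_0$-dimensional equicorrelated standard normal with common correlation $\rho$, so
$$FWER_{Bon}(n,\alpha,\rho) = \mathbb{P}\Bigl(\max_{i \in \mathcal{A}} X_i > c_{\alpha,n}\Bigr).$$
Because $c_{\alpha,n}$ is strictly increasing in $n$ and $n_0 \leq n$, this is bounded above by $\mathbb{P}(\max_{j=1}^{n_0} Y_j > c_{\alpha,n_0})$ with $(Y_1,\dots,Y_{n_0})$ drawn from the same joint law, i.e., by the global null FWER for $n_0$ hypotheses. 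It therefore suffices to prove the global null statement.

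For the global null case I would use the standard representation $X_i = \sqrt{\rho}\,Z + \sqrt{1-\rho}\,W_i$ with $Z, W_1, \dots, W_n$ i.i.d.\ $\mathcal{N}(0,1)$; conditioning on $Z$ yields
$$FWER_{Bon}(n,\alpha,\rho) = 1 - \mathbb{E}\!\left[\Phi\!\left(\frac{c_{\alpha,n} - \sqrt{\rho}\,Z}{\sqrt{1-\rho}}\right)^{\!n}\right].$$
For each fixed $z$, using the asymptotic $c_{\alpha,n} \sim \sqrt{2 \log n}$ and Mill's ratio $1-\Phi(t) \sim \phi(t)/t$, one checks
$$n\!\left[1 - \Phi\!\left(\tfrac{c_{\alpha,n} - \sqrt{\rho}z}{\sqrt{1-\rho}}\right)\right] = O\!\bigl(n^{-\rho/(1-\rho)}/\sqrt{\log n}\bigr) \longrightarrow 0,$$
which uses $\rho > 0$ in an essential way. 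Hence $\Phi(\cdot)^n \to 1$ pointwise; since the integrand is bounded by $1$, dominated convergence delivers the global null conclusion. The boundary case $\rho = 1$ is trivial because all $X_i$ coincide with $Z$ and $\mathbb{P}(Z > c_{\alpha,n}) = \alpha/n \to 0$.

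To finish, given $\epsilon > 0$ choose $K$ so that the global null FWER on at least $K$ hypotheses is at most $\epsilon$ (possible by the global null case). For every $n$, either $n_0(n) \geq K$, in which case the reduction gives $FWER_{Bon}(n,\alpha,\rho) \leq \epsilon$; or $n_0(n) < K$, in which case the crude Bonferroni union bound gives $FWER_{Bon}(n,\alpha,\rho) \leq K\alpha/n \leq \epsilon$ for $n$ large. Both cases yield $FWER_{Bon}(n,\alpha,\rho) \to 0$.

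The main obstacle lies in the global null step: producing a sharp enough expansion of $c_{\alpha,n}$ so that, after rescaling by $1/\sqrt{1-\rho}$, the Gaussian tail decays fast enough to beat the factor of $n$ out front. The exponent $-\rho/(1-\rho)$ becoming strictly negative is exactly what distinguishes this conclusion from the classical independence limit $1-e^{-\alpha}$, and it is the sole place where the strict positivity of $\rho$ enters the argument.
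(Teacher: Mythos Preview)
Your argument is correct and follows essentially the same route as the paper (and the cited Dey--Bhandari result): the conditional-independence decomposition $X_i=\sqrt{\rho}\,Z+\sqrt{1-\rho}\,W_i$ under the global null, pointwise convergence $\Phi(\cdot)^n\to 1$, and dominated convergence; your reduction of the arbitrary configuration to the global null via the true-null subvector and the monotonicity of $c_{\alpha,n}$ is the natural one and requires no hypothesis on the alternatives, exactly as Theorem~\ref{thm2.2} is stated. One small imprecision: the displayed rate $O\bigl(n^{-\rho/(1-\rho)}/\sqrt{\log n}\bigr)$ omits a factor of order $e^{C(z)\sqrt{\log n}}$ coming from the cross term $-2\sqrt{\rho}\,z\,c_{\alpha,n}$ in $t^2$, but this is sub-polynomial and the conclusion $n[1-\Phi(t)]\to 0$ for each fixed $z$ is unaffected.
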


The proofs of Theorem \ref{thm2.1} and \ref{thm2.2} exploit an well known result on equicorrelated multivariate normal variables with equal marginal variances. Under the global null hypothesis, the sequence $\left\{X_{r}\right\}_{r \geq 1}$ is exchangeable in the equicorrelated normal set-up. In other words,  $$\left(X_{i_{1}}, \ldots, X_{i_{k}}\right) \sim N_{k}\left(\mathbf{0}_{\mathbf{k}},(1-\rho) I_{k}+\rho J_{k}\right))$$ 
where $J_{k}$ is the $k \times k$ matrix of all ones. Thus, for each $i \geq 1$, $X_{i}=\theta+Z_{i}$
where $\theta$ has a normal distribution with mean $0$, independent of $\left\{Z_{n}\right\}_{n \geq 1}$ and $Z_{i}$’s are i.i.d normal random variables. $\operatorname{Cov}\left(X_{i}, X_{j}\right)=\rho$ gives $\operatorname{Var}(\theta)=\rho$. Hence, 
$\theta \sim \mathcal{N}(0, \rho)$ and $Z_{i} \overset{iid}{\sim} \mathcal{N}(0,1-\rho)$ for each $i \geq 1$. \\

The authors in \cite{deybhandari} also extend Theorem \ref{thm2.2} to arbitrarily correlated normal setups:
\begin{theorem}\label{thm2.3}
Let $\Sigma_n$ be the correlation matrix of $X_1, \ldots, X_n$ with $(i,j)$’th entry $\rho_{ij}$ such that $\liminf \rho_{ij}=\delta>0$. Then, for any $\alpha \in (0,1)$, we have
$$\lim_{n \to \infty}FWER_{Bon}(n,\alpha,\mathbf{\Sigma}_{n}) = 0$$
under any configuration of true and false null hypotheses.
\end{theorem}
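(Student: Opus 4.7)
The plan is to reduce the arbitrarily correlated case to the equicorrelated case (Theorem~\ref{thm2.2}) via Slepian's inequality, the only complication being that the condition $\liminf \rho_{ij} = \delta$ supplies a uniform lower bound on the correlations only after discarding finitely many indices.

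Fix any $\rho' \in (0, \delta)$. By hypothesis there exists $N = N(\rho')$ such that $\rho_{ij} \geq \rho'$ for every $i \neq j$ with $i, j \geq N$. Partition the true-null index set as $\mathcal{A} = \mathcal{A}_1 \cup \mathcal{A}_2$, where $\mathcal{A}_1 = \mathcal{A} \cap \{1, \dots, N-1\}$ has size at most $N-1$ (a constant in $n$), and $\mathcal{A}_2 = \mathcal{A} \cap \{N, \dots, n\}$. By the union bound,
$$FWER_{Bon}(n, \alpha, \Sigma_n) \leq \mathbb{P}\Bigl(\max_{i \in \mathcal{A}_1} X_i > c_{\alpha,n}\Bigr) + \mathbb{P}\Bigl(\max_{i \in \mathcal{A}_2} X_i > c_{\alpha,n}\Bigr),$$
and the first term is trivially bounded by $|\mathcal{A}_1|\,\alpha/n \leq (N-1)\alpha/n \to 0$.

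For the second term, $(X_i)_{i \in \mathcal{A}_2}$ is a centered Gaussian vector with unit variances whose pairwise correlations all exceed $\rho'$. Slepian's inequality, compared against an $\mathcal{N}(\mathbf{0}, M_{|\mathcal{A}_2|}(\rho'))$ vector evaluated at the common threshold $c_{\alpha,n}$, yields
$$\mathbb{P}\Bigl(\max_{i \in \mathcal{A}_2} X_i > c_{\alpha,n}\Bigr) \leq FWER_{Bon}\bigl(|\mathcal{A}_2|,\ \alpha|\mathcal{A}_2|/n,\ \rho'\bigr) \leq FWER_{Bon}\bigl(|\mathcal{A}_2|,\ \alpha,\ \rho'\bigr),$$
the second inequality using monotonicity of FWER in the nominal level together with $|\mathcal{A}_2|/n \leq 1$. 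If $|\mathcal{A}_2| \to \infty$ along the sequence, Theorem~\ref{thm2.2} drives this bound to $0$; if $|\mathcal{A}_2|$ stays bounded, then $|\mathcal{A}_2|\,\alpha/n \to 0$ already suffices by a crude union bound.

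The main subtlety is the one already flagged: the liminf condition does not give a uniform lower bound on $\rho_{ij}$ across all of $\{1, \dots, n\}$, so the finite exceptional block $\mathcal{A}_1$ must be peeled off before Slepian's inequality can produce a clean equicorrelated dominator on $\mathcal{A}_2$. Once this bookkeeping is done, Theorem~\ref{thm2.2} closes the argument, and the fact that $\mathcal{A}$ is an arbitrary subset of $\{1,\dots,n\}$ enters only through the simple splitting and hence causes no additional difficulty.
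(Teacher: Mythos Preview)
Your argument is correct and follows the same route the paper indicates: the paper attributes Theorem~\ref{thm2.3} to \cite{deybhandari} and states explicitly that the reduction to the equicorrelated case (Theorem~\ref{thm2.2}) is carried out via Slepian's inequality (Theorem~\ref{thm2.4}), exactly as you do. Your splitting $\mathcal{A}=\mathcal{A}_1\cup\mathcal{A}_2$ to accommodate the $\liminf$ hypothesis is a careful bookkeeping step that the paper's parallel proof of Theorem~\ref{thm3.2} glosses over (there Slepian is invoked directly against $M_n(\rho)$ without isolating a finite exceptional block), so if anything your write-up is slightly more scrupulous on this point than the paper itself.
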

Theorem \ref{thm2.3}, a much stronger result than Theorem \ref{thm2.1}, highlights the fundamental problem of using Bonferroni method in a
simultaneous testing problem. The authors in \cite{deybhandari} establish Theorem \ref{thm2.3} using a famous inequality due to Slepian ~\cite{Slepian}: 
\begin{theorem}\label{thm2.4}
Let $\mathbf{X}$ follow $\mathbf{N}_{k}(\mathbf{0}, \mathbf{\Sigma})$, where $\mathbf{\Sigma}$ is a $k \times k$ correlation matrix. Let $\mathbf{a}=\left(a_{1}, \ldots, a_{k}\right)^{\prime}$ be an arbitrary but fixed real 
vector. Consider the quadrant probability
$$g(k, \mathbf{a}, \mathbf{\Sigma})=\mathbb{P}_{\mathbf{\Sigma}}\left[\bigcap_{i=1}^{k}\left\{X_{i} \leqslant a_{i}\right\}\right].$$ Let $\mathbf{R}=\left(\rho_{i j}\right)$ and $\mathbf{T}=\left(\tau_{i j}\right)$ be two positive semidefinite correlation matrices. If $\rho_{i j} \geqslant \tau_{i j}$ holds for all $i, j$, then $g(k, \mathbf{a}, \mathbf{R}) \geq g(k, \mathbf{a}, \mathbf{T})$, i.e
$$
\mathbb{P}_{\mathbf{\Sigma}=\mathbf{R}}\left[\bigcap_{i=1}^{k}\left\{X_{i} \leqslant a_{i}\right\}\right] \geqslant \mathbb{P}_{\mathbf{\Sigma}=\mathbf{T}}\left[\bigcap_{i=1}^{k}\left\{X_{i} \leqslant a_{i}\right\}\right]
$$
holds for all $\mathbf{a}=\left(a_{1}, \ldots, a_{k}\right)^{\prime} .$ Moreover, the inequality is strict if $\mathbf{R}, \mathbf{T}$ are positive definite and if the strict inequality $\rho_{i j}>\tau_{i j}$ holds for some $i, j$.
\end{theorem}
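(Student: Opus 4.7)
The plan is to prove Slepian's inequality via a Gaussian interpolation between the two covariance matrices. For $t \in [0,1]$, set $\Sigma(t) = tR + (1-t)T$; convexity of the positive semidefinite cone guarantees that each $\Sigma(t)$ is a valid correlation matrix (the unit diagonal is preserved). Let $X(t) \sim \mathbf{N}_k(\mathbf{0}, \Sigma(t))$ and define
$$\phi(t) = \mathbb{P}\left[\bigcap_{i=1}^k \{X_i(t) \leq a_i\}\right].$$
Since $\phi(0) = g(k, \mathbf{a}, T)$ and $\phi(1) = g(k, \mathbf{a}, R)$, it suffices to show that $\phi$ is nondecreasing on $[0,1]$.

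The core step is to compute $\phi'(t)$ and identify its sign. Writing $p_t$ for the density of $\mathbf{N}_k(\mathbf{0}, \Sigma(t))$ and invoking Plackett's identity $\partial p_t / \partial \Sigma_{ij}(t) = \partial^2 p_t / \partial x_i \partial x_j$ for $i \neq j$, integration over the orthant $\prod_l (-\infty, a_l]$ yields, after the chain rule,
$$\phi'(t) = \sum_{i < j} (\rho_{ij} - \tau_{ij}) \, p_t^{(i,j)}(a_i, a_j) \, \mathbb{P}\left[X_l(t) \leq a_l, \, l \neq i, j \,\middle|\, X_i(t) = a_i, X_j(t) = a_j\right],$$
where $p_t^{(i,j)}$ denotes the bivariate marginal density of $(X_i(t), X_j(t))$. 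Each of the three factors in every summand is nonnegative under the hypothesis $\rho_{ij} \geq \tau_{ij}$, so $\phi'(t) \geq 0$ throughout $[0,1]$. Integrating from $0$ to $1$ delivers $\phi(1) \geq \phi(0)$, which is the desired inequality.

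For the strict-inequality claim, assume $R$ and $T$ are positive definite; then $\Sigma(t)$ is positive definite for every $t \in (0,1)$, so each bivariate marginal density $p_t^{(i,j)}(a_i, a_j)$ is strictly positive and the conditional orthant probability is positive as well (being the distribution function of a nondegenerate Gaussian evaluated at a finite point). Hence the summand indexed by any pair with $\rho_{ij} > \tau_{ij}$ contributes strictly positively on the whole open interval, forcing $\phi(1) > \phi(0)$.

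The principal technical obstacle will be to justify the interchange of differentiation and integration underlying the formula for $\phi'(t)$, and, more fundamentally, to establish Plackett's identity itself. I would derive the latter either by direct calculation: differentiating the explicit exponential-quadratic form of $p_t$ with respect to $\Sigma_{ij}$ and comparing to its second spatial derivative (using $\partial(\Sigma^{-1})/\partial \Sigma_{ij} = -\Sigma^{-1} E_{ij} \Sigma^{-1}$ where $E_{ij}$ has a single nonzero entry), or by a characteristic-function argument where the relation $\partial \widehat{p}_t / \partial \Sigma_{ij} = -\xi_i \xi_j \widehat{p}_t$ for $i \neq j$ transparently Fourier-transforms into the claimed identity. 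Once this identity and the attendant dominated-convergence estimates are in hand, the rest of the argument is mechanical.
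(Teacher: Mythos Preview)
The paper does not actually prove this theorem; Theorem~2.4 is simply stated as Slepian's classical inequality with a citation to \cite{Slepian}, and is then used as a black-box tool in the proofs of Theorems~\ref{thm2.3} and~\ref{thm3.2}. There is therefore no ``paper's own proof'' to compare against, and your proposal goes strictly beyond what the paper does.

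On its merits, your Gaussian-interpolation argument via Plackett's identity is the standard and correct route to Slepian's inequality, and the strict-inequality discussion is handled properly. One point worth flagging: the hypothesis allows $\mathbf{R}$ and $\mathbf{T}$ to be merely positive \emph{semidefinite}, in which case the interpolated matrix $\Sigma(t)$ need not have a density and Plackett's identity cannot be invoked directly. The usual fix is to perturb to $\Sigma(t) + \varepsilon I_k$ (or replace $\mathbf{R},\mathbf{T}$ by $(1-\varepsilon)\mathbf{R}+\varepsilon I_k$, etc.), run your argument on the resulting positive-definite path, and then let $\varepsilon \downarrow 0$, using continuity of the orthant probability in the covariance matrix (weak convergence of the Gaussian laws). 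This is routine but should be made explicit if you intend the proof to cover the full statement.
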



Throughout this work, $P_{i}$ denotes the $p$-value corresponding to the $i$-th null hypothesis $H_{0i}$, $1 \leq i \leq n$. Also, let $P_{(1)} \leqslant \ldots \leqslant P_{(n)}$ be the ordered $p$-values. Let the null hypothesis corresponding to the p-value $P_{(i)}$ be denoted as $H_{(0i)}$, $1 \leq i \leq n$. 



\subsection{Step-down and Step-up Procedures} 

Single-step MTPs (e.g., Bonferroni's method, Sidak's method) compare the individual test statistics to the corresponding cut-offs simultaneously, and they stop after performing this simultaneous ‘joint’ comparison. Often stepwise methods possess greater power than the single-step procedures, while still controlling FWER (or, in general, the error rate under consideration) at the desired level.

Consider the simplex 
$$\mathcal{S}_n=\left\{\mathbf{t}=\left(t_1, \ldots, t_n\right) \in \mathbb{R}^n: 0 \leq t_1 \leq \ldots \leq t_n \leq 1\right\}.$$

A $p$-value based step-down MTP uses a vector of cutoffs $\textbf{u} =\left(u_1, \ldots, u_n\right) \in \mathcal{S}_n$, and works as follows. The step-down method rejects a hypothesis $H_{(i)}$ if and only if $P_{(j)} \leq u_{j}$ for all $j \leq i$. In other words, the step-down MTP compares the most significant $p$-value $P_{(1)}$ with the smallest $u$-value $u_{1}$ at first and so on. One can also formally describe a step-down MTP as follows. Let $m_1=\max \left\{i: P_{(j)} \leq u_j\right.$ for all $\left.j=1, \ldots, i\right\}$. Then the step-down procedure based on critical values $\textbf{u}$ rejects $H_{(1)}, \ldots, H_{\left(m_1\right)}$. 

\begin{example} 
The Bonferroni method is a step-down procedure with $u_i=\alpha / n$, $i=1, \ldots, n$.
\end{example}

\begin{example} 
The Sidak method is a step-down MTP with $u_i=1 - (1-\alpha)^{1 / n}$, $i=1, \ldots, n$.
\end{example}

\begin{example} 
The Holm \cite{Holm} method is a popular step-down MTP with $u_i=\alpha /(n-i+1), i=1, \ldots, n$.
\end{example}

\begin{example} 
Benjamini and Liu \cite{BL} introduced a step-down MTP with
$$u_i=\min \left(1, \frac{n q}{(n-i+1)^2}\right), \quad 1 \leq i \leq n \quad(0<q<1).$$
\end{example}

\begin{example} 
Benjamini and Liu studied another step-down MTP in \cite{BL2} with
$$
u_i=1 - \Bigg[1 - \min \left(1, \frac{n q}{n-i+1}\right)\Bigg]^{1/{(n-i+1)}}, \quad 1 \leq i \leq n \quad(0<q<1).
$$
\end{example}

\begin{example} 
Benjamini and Liu mentioned in \cite{BL2} a Holm-type procedure with the critical values
$$
u_i=1 - (1 - q)^{1/{(n-i+1)}}, \quad 1 \leq i \leq n \quad(0<q<1).
$$
\end{example}

The step-up MTP also utilizes set of critical values, say $\textbf{u} =\left(u_1, \ldots, u_n\right) \in \mathcal{S}_n$. But the step-up method is inherently different from the step-down method in the sense that it starts by comparing the least significant $p$-value $P_{(n)}$ with the largest $u$-value $u_n$ and so on. Formally, the step-up method based on critical values $\textbf{u}$ rejects the hypotheses $H_{(1)}, \ldots, H_{\left(m_2\right)}$, where $m_2=\max \left\{i: P_{(i)} \leq u_{i}\right\}$. If such a $m_2$ does not exist, then the procedure does not reject any null hypothesis.

\begin{example} 
The Bonferroni correction is also a step-up MTP, where $u_i=\alpha / n, i=1, \ldots, n$.
\end{example}

\begin{example} 
The Sidak method is also a step-up procedure with $u_i=1 - (1-\alpha)^{1 / n}$, $i=1, \ldots, n$.
\end{example}

\begin{example} 
The Hochberg \cite{Hochberg} method is a popular step-up MTP with $u_i=\alpha /(n-i+1)$.
\end{example}

\begin{example} 
The classic Benjamini-Hochberg \cite{BH} method is a step-up procedure with $u_i=i \alpha / n$.
\end{example}

\section{Limiting FWER of Step-down Procedures}
Holm method is a step-down MTP which uses modified critical values and utilizes the Bonferroni inequality. It controls the FWER under any dependence of the test statistics. The authors in \cite{deybhandari} show the following result on the limiting FWER of Holm's method \cite{Holm} under the equicorrelated normal framework:

\begin{theorem}\label{thm3.1}
Suppose $\mu^{\star} = \sup \mu_{i} < \infty$. Then, under any configuration of true and false null hypotheses, we have
$$\displaystyle \lim_{n \to \infty} FWER_{Holm}(n, \alpha, \rho) = 0 \quad \text{for all} \hspace{2mm} \alpha \in (0,1) \hspace{2mm} \text{and} \hspace{2mm} \rho \in (0,1].$$
\end{theorem}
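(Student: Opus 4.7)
My plan rests on a simple structural observation: Holm's procedure produces at least one rejection if and only if the smallest $p$-value satisfies $P_{(1)} \leq \alpha/n$, equivalently $\max_{1 \leq i \leq n} X_i > c_{\alpha, n}$. Since any type I error requires at least one rejection, this yields the configuration-independent bound
\begin{equation*}
    FWER_{Holm}(n, \alpha, \rho) \leq \mathbb{P}\!\left(\max_{1 \leq i \leq n} X_i > c_{\alpha, n}\right),
\end{equation*}
reducing the entire problem to showing the right-hand side tends to $0$, even in the presence of positive $\mu_i$'s. Note that at this point the specific configuration of true and false nulls has already dropped out of the analysis.

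Using $\mu^{\star} = \sup_i \mu_i < \infty$, I would write $X_i = \mu_i + Y_i$ with $(Y_1, \ldots, Y_n)$ equicorrelated standard normal and bound $\mu_i \leq \mu^{\star}$ to obtain
\begin{equation*}
    \mathbb{P}\!\left(\max_i X_i > c_{\alpha, n}\right) \leq \mathbb{P}\!\left(\max_i Y_i > c_{\alpha, n} - \mu^{\star}\right).
\end{equation*}
I would then invoke the exchangeable decomposition $Y_i = \theta + Z_i$ with $\theta \sim \mathcal{N}(0, \rho)$ independent of i.i.d.\ $Z_i \sim \mathcal{N}(0, 1 - \rho)$, the same device underlying Theorems \ref{thm2.1}--\ref{thm2.2}. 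Conditioning on $\theta$ gives the explicit representation
\begin{equation*}
    \mathbb{P}\!\left(\max_i Y_i > c_{\alpha, n} - \mu^{\star}\right) = \int_{-\infty}^{\infty}\!\left[1 - \Phi\!\left(\frac{c_{\alpha, n} - \mu^{\star} - s}{\sqrt{1-\rho}}\right)^{\!n}\right] \frac{1}{\sqrt{\rho}}\,\phi\!\left(\frac{s}{\sqrt{\rho}}\right) ds.
\end{equation*}

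The main technical step is to show the integrand tends to $0$ pointwise in $s$. Using the asymptotic $c_{\alpha, n} \sim \sqrt{2 \ln n}$ together with the Mills ratio $1 - \Phi(t) \sim \phi(t)/t$, for each fixed $s$ one has
\begin{equation*}
    \frac{c_{\alpha, n} - \mu^{\star} - s}{\sqrt{1-\rho}} - \sqrt{2 \ln n} \;=\; \sqrt{2 \ln n}\!\left(\frac{1}{\sqrt{1-\rho}} - 1\right) + O(1) \;\longrightarrow\; \infty
\end{equation*}
whenever $\rho > 0$, which forces $\Phi(\cdot)^{n} \to 1$ and hence the bracketed integrand to $0$. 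The dominated convergence theorem, with the integrand uniformly bounded by $1$ against the integrable Gaussian density, then closes the argument.

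The main obstacle is the simultaneous control of the false-null means and the Bonferroni-type threshold $c_{\alpha, n}$; Theorem \ref{thm2.2} cannot be invoked as a black box here because it bounds probabilities over true nulls only, whereas our upper bound involves the maximum over \emph{all} $X_i$'s. The hypothesis $\mu^{\star} < \infty$ is used exactly to confine the mean shift to a constant that is asymptotically negligible against $c_{\alpha, n} \to \infty$. Were $\mu^{\star}$ permitted to grow at rate $\sqrt{2 \ln n}$, the argument of $\Phi$ would no longer dominate $\sqrt{2 \ln n}$ and the limiting FWER could be bounded away from $0$, showing that the boundedness assumption cannot be dropped.
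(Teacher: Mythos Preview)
Your proposal is correct and follows essentially the same route as the paper: the paper does not prove Theorem~\ref{thm3.1} separately (it is quoted from \cite{deybhandari}), but its proof of Theorem~\ref{thm3.2} contains exactly your argument---bound $FWER_{Holm}$ by $\mathbb{P}(P_{(1)}\le\alpha/n)=\mathbb{P}(X_{(n)}\ge c_{\alpha,n})$, invoke the exchangeable decomposition $Y_i=\theta+Z_i$, replace each $\mu_i$ by $\mu^\star$, and arrive at $1-\mathbb{E}_\theta\big[\Phi^n\big((c_{\alpha,n}-\theta-\mu^\star)/\sqrt{1-\rho}\big)\big]\to 0$. Your write-up is in fact more explicit than the paper's at the final step (you justify the limit via $c_{\alpha,n}\sim\sqrt{2\ln n}$, Mills' ratio, and dominated convergence, where the paper simply asserts the limit); the only cosmetic gap is that your integral representation presumes $\rho<1$, so the boundary case $\rho=1$ should be handled by the one-line observation that then all $Y_i$ coincide a.s.\ and $\mathbb{P}(Y_1>c_{\alpha,n}-\mu^\star)\to 0$.
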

We extend this result to arbitrary correlated normal setups:

\begin{theorem}\label{thm3.2}
Let $\Sigma_n$ be the correlation matrix of $X_1, \ldots, X_n$ with $(i,j)$’th entry $\rho_{ij}$ such that $\liminf \rho_{ij}=\delta>0$. Suppose $\mu^{\star} = \sup \mu_{i} < \infty$. Then, for any $\alpha \in (0,1)$,
$$\lim_{n \to \infty}FWER_{Holm}(n,\alpha,\mathbf{\Sigma}_{n}) = 0$$
under any configuration of true and false null hypotheses.
\end{theorem}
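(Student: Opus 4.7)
The plan is to adapt the Slepian-inequality reduction used in the proof of Theorem~\ref{thm2.3} and combine it with the exchangeable-decomposition argument behind Theorem~\ref{thm2.2}. The starting observation is that Holm's procedure rejects \emph{any} hypothesis if and only if the smallest $p$-value clears the tightest cutoff $u_1 = \alpha/n$, i.e.\ $P_{(1)} \leq \alpha/n$, which in the one-sided normal setting is equivalent to $\max_{1 \leq i \leq n} X_i > c_{\alpha, n}$. Since every type~I error is in particular a rejection, this yields the crude but sufficient bound
$$FWER_{Holm}(n, \alpha, \Sigma_n) \;\leq\; \mathbb{P}_{\Sigma_n}\Bigg( \bigcup_{i=1}^{n} \{X_i > c_{\alpha, n}\} \Bigg).$$

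Next I would center: setting $Y_i = X_i - \mu_i \sim \mathcal{N}(0, 1)$, which shares the correlation matrix $\Sigma_n$, the bound $\mu_i \leq \mu^{\star}$ gives
$$\mathbb{P}_{\Sigma_n}\Bigg( \bigcup_{i} \{X_i > c_{\alpha, n}\} \Bigg) \;\leq\; \mathbb{P}_{\Sigma_n}\Bigg( \bigcup_{i} \{Y_i > c_{\alpha, n} - \mu^{\star}\} \Bigg).$$
Pick any $\delta' \in (0, \delta)$; the hypothesis $\liminf \rho_{ij} = \delta > 0$ ensures $\rho_{ij} \geq \delta'$ eventually for every $i \neq j$. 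Applying Slepian's inequality (Theorem~\ref{thm2.4}) to $(Y_1, \ldots, Y_n)$ with the common threshold $c_{\alpha, n} - \mu^{\star}$ dominates the previous probability by its equicorrelated counterpart:
$$\mathbb{P}_{\Sigma_n}\Bigg( \bigcup_{i} \{Y_i > c_{\alpha, n} - \mu^{\star}\} \Bigg) \;\leq\; \mathbb{P}_{M_n(\delta')}\Bigg( \bigcup_{i} \{Y_i > c_{\alpha, n} - \mu^{\star}\} \Bigg).$$

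It remains to show the right-hand side vanishes. Under $M_n(\delta')$ the standard exchangeable decomposition $Y_i = \sqrt{\delta'} \theta + \sqrt{1 - \delta'} Z_i$, with $\theta, Z_1, \ldots, Z_n$ i.i.d.\ $\mathcal{N}(0, 1)$, yields
$$\mathbb{P}_{M_n(\delta')}\Bigg( \bigcup_{i} \{Y_i > c_{\alpha, n} - \mu^{\star}\} \Bigg) = 1 - \mathbb{E}\left[ \Phi\left( \frac{c_{\alpha, n} - \mu^{\star} - \sqrt{\delta'} \theta}{\sqrt{1 - \delta'}} \right)^{n} \right].$$
Since $c_{\alpha, n} \sim \sqrt{2 \log n}$, for each fixed $\theta$ the argument of $\Phi$ grows like $\sqrt{2 \log n / (1 - \delta')}$, which is strictly faster than $\sqrt{2 \log n}$; a Mills-ratio computation then yields $n \, (1-\Phi(\cdot)) \to 0$ and hence $\Phi(\cdot)^{n} \to 1$ pointwise in $\theta$. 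Splitting the expectation over $\{\theta \leq T\}$ and $\{\theta > T\}$ and sending $n \to \infty$ before $T \to \infty$, exactly as in the proof of Theorem~\ref{thm2.2}, shows the expectation tends to $1$ and so the equicorrelated probability to $0$.

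The main subtlety is that the opening bound discards the step-down structure beyond rank one; this loss is only apparent, because the rank-one cutoff of Holm coincides with the Bonferroni cutoff, so the chain of inequalities stays comparable to the Bonferroni FWER of Theorem~\ref{thm2.2} and the asymptotic zero is preserved regardless of how many nulls are true or false. The other delicate point is the limit-expectation interchange in the final display, which is handled by the Gaussian tail of $\theta$ via the splitting argument above rather than by a direct application of dominated convergence.
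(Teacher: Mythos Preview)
Your proposal is correct and follows essentially the same route as the paper's proof: bound the Holm FWER by the rank-one rejection probability $\mathbb{P}(P_{(1)}\le\alpha/n)=\mathbb{P}(\max_i X_i>c_{\alpha,n})$, reduce to the equicorrelated case via Slepian's inequality, and use the exchangeable decomposition to show the resulting expectation tends to~$1$. Your version is in fact slightly more careful than the paper's in two respects --- you center the $X_i$'s before invoking Theorem~\ref{thm2.4} (which is stated for mean-zero Gaussians), and you make explicit the choice of $\delta'<\delta$ needed to accommodate the $\liminf$ hypothesis --- but these are refinements of the same argument, not a different approach.
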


\begin{proof}[\textbf{\upshape Proof of Theorem \ref{thm3.2}.}]
Let $R_{n}(H)$ and $V_{n}(H)$ denote the number of rejected hypotheses and type I errors of Holm’s MTP respectively. Then,

$$\begin{aligned}
FWER_{Holm}(n,\alpha,\mathbf{\Sigma}_{n})
= & \mathbb{P}_{\Sigma_{n}}(V_{n}(H)\geq 1)\\
\leq &\mathbb{P}_{\Sigma_{n}}(R_{n}(H)\geq 1)\\
= & \mathbb{P}_{\Sigma_{n}}(P_{(1)} \leq \alpha/n)\\
= & \mathbb{P}_{\Sigma_{n}}(X_{(n)} \geq c_{\alpha,n}).\\
\end{aligned}$$
Theorem \ref{thm2.4} gives $\mathbb{P}_{\Sigma_{n}}(X_{(n)} \geq c_{\alpha,n}) \leq \mathbb{P}_{M_{n}(\rho)}(X_{(n)} \geq c_{\alpha,n})$. Without any loss of generality, we may assume $X_{i} \sim N(\mu_{i},1)$ ($\mu_i >0$) for $1 \leq i \leq n_{1}$ and for $n_{1} < i \leq n$, $X_{i} \sim N(0,1)$. 

\noindent Thus, 
$$\begin{aligned}
 FWER_{Holm}(n,\alpha,\mathbf{\Sigma}_{n}) 
\leq & \mathbb{P}_{M_{n}(\rho)}(X_{(n)} \geq c_{\alpha,n}) \\
= & 1 - \mathbb{P}_{M_{n}(\rho)}(X_{(n)} \leq c_{\alpha,n})\\
=&1 - \mathbb{P}_{M_{n}(\rho)}\left(X_{i} \leqslant c_{\alpha, n} \quad \forall i=1,2, \ldots, n\right)\\
=&1 - \mathbb{P}_{M_{n}(\rho)}\Bigg[\bigcap_{i=1}^{n_1} \{\theta+Z_{i}+\mu_{i} \leqslant c_{\alpha, n} \} \bigcap \bigcap_{i=n_1 +1}^{n} \{\theta+Z_{i} \leqslant c_{\alpha, n} \} \Bigg]\\
=&1 - \mathbb{E}_{\theta}\Bigg[\bigg\{\prod_{i=1}^{n_1} \Phi \left(\frac{c_{\alpha, n}-\theta - \mu_{i}}{\sqrt{1-\rho}}\right)\bigg\}\cdot \Phi^{n-n_1}\left(\frac{c_{\alpha, n}-\theta}{\sqrt{1-\rho}}\right)\Bigg]\\
\leq &1 - \mathbb{E}_{\theta} \left[\Phi^{n}\left(\frac{c_{\alpha, n}-\theta - \mu^{\star}}{\sqrt{1-\rho}}\right)\right].
\end{aligned}$$
The last quantity above tends to zero asymptotically since $\mu^{\star} < \infty$.\end{proof}

\noindent The proof of Theorem \ref{thm3.2} also gives us the following useful result:
\begin{corollary}\label{cor1}
Let $\Sigma_n$ be the correlation matrix of $X_1, \ldots, X_n$ with $(i,j)$’th entry $\rho_{ij}$ such that $\liminf \rho_{ij}=\delta>0$. Suppose $\mu^{\star} = \sup \mu_{i} < \infty$. Then, for any $\alpha \in (0,1)$,
$$\lim_{n \to \infty}\mathbb{P}_{\Sigma_{n}}\bigg(R_{n}(H) \geq 1\bigg) = \lim_{n \to \infty}\mathbb{P}_{\Sigma_{n}}\bigg(P_{(1)} \leq \alpha/n\bigg) = 0$$
under any configuration of true and false null hypotheses.
\end{corollary}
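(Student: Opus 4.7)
The plan is to observe that this corollary is essentially a by-product of the proof of Theorem \ref{thm3.2} already displayed, so almost no new work is required; I would simply repackage the chain of equalities and inequalities established there. Concretely, I would first note the two identifications:
\begin{equation*}
\{R_n(H) \geq 1\} = \{P_{(1)} \leq \alpha/n\} = \{X_{(n)} \geq c_{\alpha,n}\}.
\end{equation*}
The first equality is just the definition of Holm's step-down procedure (it rejects at least one hypothesis iff the smallest $p$-value clears the initial Bonferroni cutoff $\alpha/n$), and the second follows from the one-to-one decreasing correspondence $P_i = 1 - \Phi(X_i)$, which sends $P_{(1)}$ to $1 - \Phi(X_{(n)})$ and $\alpha/n$ to $1 - \Phi(c_{\alpha,n})$.

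Once these identifications are recorded, the two probabilities in the statement are literally the same number, namely $\mathbb{P}_{\Sigma_n}(X_{(n)} \geq c_{\alpha,n})$, so it suffices to show that this tends to $0$. But this is exactly what the display in the proof of Theorem \ref{thm3.2} accomplishes: Slepian's inequality (Theorem \ref{thm2.4}) replaces $\Sigma_n$ by $M_n(\rho)$ for any fixed $\rho \in (0, \delta)$, the exchangeable representation $X_i = \theta + Z_i + \mu_i$ lets us condition on $\theta$, and bounding each $\mu_i$ by $\mu^{\star} < \infty$ yields
\begin{equation*}
\mathbb{P}_{\Sigma_n}(X_{(n)} \geq c_{\alpha,n}) \leq 1 - \mathbb{E}_\theta\!\left[\Phi^n\!\left(\frac{c_{\alpha,n} - \theta - \mu^{\star}}{\sqrt{1-\rho}}\right)\right] \longrightarrow 0.
\end{equation*}

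The main (and only) subtlety is that the corollary is claimed under any configuration of true and false null hypotheses, not just the global null. But the Slepian step together with the uniform bound $\mu_i \leq \mu^{\star}$ treats genuinely nonzero means in exactly the same way as zero means, so the argument in the proof of Theorem \ref{thm3.2} already delivers the general configuration for free. I therefore expect no real obstacle; the corollary amounts to naming the intermediate quantity $\mathbb{P}_{\Sigma_n}(P_{(1)} \leq \alpha/n)$ that appeared inside the previous proof and noting that its vanishing was established there without ever invoking the weaker event $\{V_n(H) \geq 1\}$.
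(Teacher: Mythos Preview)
Your proposal is correct and matches the paper's approach exactly: the paper does not give a separate proof of Corollary~\ref{cor1} but simply states that it is yielded by the proof of Theorem~\ref{thm3.2}, which is precisely the repackaging you describe. Your explicit identification of the events $\{R_n(H)\geq 1\}=\{P_{(1)}\leq \alpha/n\}=\{X_{(n)}\geq c_{\alpha,n}\}$ and your observation that the Slepian--conditioning--$\mu^\star$ bound in that proof never used the restriction to the event $\{V_n(H)\geq 1\}$ are exactly the points the paper is implicitly invoking.
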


We note that Corollary \ref{cor1} is infact a stronger result than Theorem \ref{thm3.2} since Corollary \ref{cor1} concerns the probability of rejecting any null while Theorem \ref{thm3.2} considers with the false rejections only. We extend Theorem \ref{thm3.2} to any step-down MTP below:

\begin{theorem}\label{thm3.3}
Let $\Sigma_n$ be the correlation matrix of $X_1, \ldots, X_n$ with $(i,j)$’th entry $\rho_{ij}$ such that $\liminf \rho_{ij}=\delta>0$. Suppose $\mu^{\star} = \sup \mu_{i} < \infty$ and $T$ is any step-down MTP controlling FWER at level $\alpha \in (0,1)$. Then, for any $\alpha \in (0,1)$,
$$\lim_{n \to \infty}FWER_{T}(n,\alpha,\mathbf{\Sigma}_{n}) = 0$$
under any configuration of true and false null hypotheses.
\end{theorem}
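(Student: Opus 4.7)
The plan is to piggyback on Corollary~\ref{cor1} by exploiting the monotonicity built into any step-down procedure. Let $T$ be a step-down MTP with ordered critical values $u_1 \leq u_2 \leq \cdots \leq u_n$. Since $T$ can reject at least one hypothesis only when $P_{(1)} \leq u_1$, the first step is to observe
$$FWER_T(n, \alpha, \Sigma_n) \leq \mathbb{P}_{\Sigma_n}(R_n(T) \geq 1) = \mathbb{P}_{\Sigma_n}(P_{(1)} \leq u_1),$$
which is precisely the reduction used at the start of the proof of Theorem~\ref{thm3.2} in the special case of Holm's rule.

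Next, I would argue that the hypothesis "$T$ controls FWER at level $\alpha$" forces $u_1 \leq \alpha/n$. This is the classical Holm-type constraint: because the union bound $\mathbb{P}(P_{(1)} \leq u_1) \leq n u_1$ is tight under adversarial dependence of the $p$-values, any step-down procedure providing generic FWER control at level $\alpha$ must satisfy $u_1 \leq \alpha/n$. All the step-down rules listed in Section~2.3 (Bonferroni, Sidak, Holm, and the Benjamini--Liu variants) indeed satisfy $u_1 = O(1/n)$, so this constraint is consistent with the catalog of examples the theorem is intended to cover.

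Combining the two steps, I would conclude
$$FWER_T(n, \alpha, \Sigma_n) \leq \mathbb{P}_{\Sigma_n}(P_{(1)} \leq u_1) \leq \mathbb{P}_{\Sigma_n}(P_{(1)} \leq \alpha/n),$$
after which Corollary~\ref{cor1} closes the argument, because the right-hand side tends to zero under the same $\liminf \rho_{ij} = \delta > 0$ and $\mu^\star < \infty$ assumptions that already underwrote the Holm case. Notice that the $\mu^\star$ hypothesis enters only through the invocation of Corollary~\ref{cor1}; the monotonicity bound on $FWER_T$ is unaffected by the specific configuration of true and false nulls.

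The main obstacle, and the step most sensitive to interpretation, is converting the phrase \emph{controlling FWER at level $\alpha$} into a concrete bound on $u_1$. If one reads the phrase narrowly, requiring control only under the particular correlated-normal framework at hand, then the extreme-value behavior of equicorrelated Gaussians allows $u_1$ to be as large as $n^{-(1-\delta)}$ up to logarithmic factors — much larger than $\alpha/n$ — and a direct Slepian-plus-extreme-value argument would be needed in place of Corollary~\ref{cor1}. Under the standard reading, however (namely that $T$ controls FWER under arbitrary dependence, as is true for the step-down rules listed in the preliminaries), the Holm-type constraint $u_1 \leq \alpha/n$ is automatic and the proof reduces cleanly to the Holm case already established.
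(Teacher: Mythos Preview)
Your proposal is correct and follows essentially the same route as the paper: bound $FWER_T$ by $\mathbb{P}_{\Sigma_n}(P_{(1)}\le u_1)$, argue $u_1\le \alpha/n$, and invoke Corollary~\ref{cor1}. The only difference is that the paper makes the crucial step $u_1\le \alpha/n$ rigorous by citing the Gordon--Salzman optimality theorem (Theorem~\ref{thm3.4}), which gives $u_i\le \alpha/(n-i+1)$ for every $i$ whenever a step-down procedure controls FWER at level~$\alpha$; your heuristic about tightness of the union bound is exactly the intuition behind that result, so replacing your sketch with a reference to Theorem~\ref{thm3.4} closes the argument cleanly and removes the interpretive caveat you flagged.
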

Theorem \ref{thm3.3} can be established using the following result due to Gordon and Salzman \cite{Gordon}.

\begin{theorem}\label{thm3.4}
Let $T$ be a step-down MTP based on the set of cut-offs $\mathbf{u} \in \operatorname{Simp}^n$. If $FWER_{T} \leq \alpha<1$, then $u_i \leq \alpha /(n-i+1), i=1, \ldots, n$.
\end{theorem}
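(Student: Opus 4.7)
The plan is to prove the contrapositive: assuming that some cutoff violates $u_i > \alpha/(n-i+1)$, I exhibit a configuration of true and false nulls together with a joint distribution of the null $p$-values for which $FWER_T > \alpha$, contradicting the hypothesis. Write $k = n-i+1$.

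The first step is to choose the worst-case configuration. Take $H_1,\ldots,H_{i-1}$ to be false nulls whose $p$-values are identically $0$ (equivalently, their test statistics are pushed to $+\infty$), and let $H_i,\ldots,H_n$ be the $k$ true nulls with $p$-values $P_1,\ldots,P_k$ that are marginally Uniform$(0,1)$. In the sorted list one then has $P_{(1)}=\cdots=P_{(i-1)}=0\le u_1,\ldots,u_{i-1}$, and $P_{(i)}=\min_{1\le j\le k}P_j$. Consequently the step-down rule makes at least one false rejection precisely when $\min_j P_j \le u_i$, so $FWER_T = \mathbb{P}(\min_j P_j \le u_i)$.

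The crux is to construct a joint law on $(P_1,\ldots,P_k)$ with Uniform marginals for which this probability exceeds $\alpha$. I would use a single driving variable $U\sim\mathrm{Uniform}(0,1)$ and, assuming first that $ku_i\le 1$, partition $[0,1]$ into disjoint arcs $A_j=[(j-1)u_i,\,j u_i)$ for $j=1,\ldots,k$ together with a remainder. On $A_j$ define $P_j := U-(j-1)u_i \in [0,u_i)$; on $A_j^c$ define $P_j$ via any Lebesgue-measure-preserving bijection onto $[u_i,1]$. Then $P_j\sim\mathrm{Uniform}(0,1)$ marginally, while the events $\{P_j\le u_i\}=\{U\in A_j\}$ are pairwise disjoint, so
$$
\mathbb{P}\bigl(\min_j P_j \le u_i\bigr) \;=\; \sum_{j=1}^k \mathbb{P}(U\in A_j) \;=\; k u_i \;>\; \alpha.
$$
If instead $ku_i>1$, pick any $v\in(\alpha/k,\,1/k]$ and run the same construction with $v$ in place of $u_i$; since $\{P_j\le u_i\}\supseteq\{P_j\le v\}$, the same argument yields $\mathbb{P}(\min_j P_j\le u_i)\ge kv>\alpha$.

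Combining the two steps gives $FWER_T > \alpha$, contradicting the assumption and hence forcing $u_i\le\alpha/(n-i+1)$ for every $i$. The main obstacle is verifying the coupling: one must check that the piecewise definition of $P_j$ on $A_j$ together with a measure-preserving extension onto $A_j^c$ really does produce Uniform marginals while preserving the disjointness of $\{P_j\le u_i\}$ across $j$. A secondary subtlety is that the result is proved over the class of joint $p$-value distributions with Uniform null marginals; since $FWER_T\le\alpha$ is understood to hold uniformly over all such distributions (the standard notion of strong FWER control assumed in Gordon and Salzman), the constructed counterexample is admissible.
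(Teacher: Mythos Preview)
The paper does not give its own proof of Theorem~\ref{thm3.4}: it is quoted as a known result of Gordon and Salzman \cite{Gordon} and used as a black box in the proof of Theorem~\ref{thm3.3}. Your argument is correct and is essentially the Gordon--Salzman construction: fix $i$, plant $i-1$ deterministic false nulls with $p$-value $0$ so that the step-down procedure automatically reaches stage $i$, and then build a joint law on the $k=n-i+1$ true-null $p$-values (via a single uniform driver and a partition of $[0,1]$ into length-$u_i$ arcs) for which the events $\{P_j\le u_i\}$ are disjoint and hence $\mathbb{P}(\min_j P_j\le u_i)=ku_i>\alpha$. Your handling of the edge case $ku_i>1$ by shrinking to $v\in(\alpha/k,1/k]$ is fine, and your closing remark correctly flags the scope of the statement (strong FWER control uniformly over all joint distributions with uniform null marginals, which is the setting of \cite{Gordon}).
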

\begin{proof}[\textbf{\upshape Proof of Theorem \ref{thm3.3}.}] We have,
$$\begin{aligned}
FWER_{T}(n,\alpha,\mathbf{\Sigma}_{n})
=\mathbb{P}_{\Sigma_{n}}(V_{n}(T)\geq 1)
\leq &\mathbb{P}_{\Sigma_{n}}(R_{n}(T)\geq 1)\\
= & \mathbb{P}_{\Sigma_{n}}(P_{(1)} \leq u_{1})\\
\leq & \mathbb{P}_{\Sigma_{n}}(P_{(1)} \leq \alpha/n). 
\end{aligned}$$
The last step above follows since we have $u_1 \leq \alpha/n$ from Theorem \ref{thm3.4}. The rest is obvious from Corollary \ref{cor1}.
\end{proof}
Theorem \ref{thm3.3} can be viewed as a \textit{universal asymptotic zero} result since it encompasses all step-down FWER controlling procedures and also accommodates any configuration of true and false null hypotheses. We also have the following interesting extension of Corollary \ref{cor1}:
\begin{corollary}\label{cor2}
Let $\Sigma_n$ be the correlation matrix of $X_1, \ldots, X_n$ with $(i,j)$’th entry $\rho_{ij}$ such that $\liminf \rho_{ij}=\delta>0$. Suppose $\mu^{\star} = \sup \mu_{i}$ is finite and $T$ is any step-down MTP  controlling FWER at level $\alpha \in (0,1)$. Then, for any $\alpha \in (0,1)$,
$$\lim_{n \to \infty}\mathbb{P}_{\Sigma_{n}}\bigg(R_{n}(T) \geq 1\bigg) = 0$$
under any configuration of true and false null hypotheses.
\end{corollary}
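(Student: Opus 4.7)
The plan is to mimic the proof of Theorem \ref{thm3.3} almost verbatim, but stop one step earlier, i.e.\ bypass the first inequality $\mathbb{P}_{\Sigma_n}(V_n(T)\geq 1)\leq \mathbb{P}_{\Sigma_n}(R_n(T)\geq 1)$ used there, since now the target quantity is already $\mathbb{P}_{\Sigma_n}(R_n(T)\geq 1)$ itself. The key observation is structural: for a step-down MTP with cut-offs $(u_1,\ldots,u_n)\in\mathcal{S}_n$, the event that \emph{at least one} hypothesis is rejected is precisely the event $\{P_{(1)}\leq u_1\}$, because the step-down rule cannot reject anything unless its first comparison succeeds.

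First I would record this equivalence and write
\[
\mathbb{P}_{\Sigma_n}\bigl(R_n(T)\geq 1\bigr)=\mathbb{P}_{\Sigma_n}\bigl(P_{(1)}\leq u_1\bigr).
\]
Next, I would invoke Theorem \ref{thm3.4} (Gordon and Salzman) to conclude that for any step-down MTP whose FWER is controlled at level $\alpha<1$, the smallest cut-off satisfies $u_1\leq \alpha/n$. This monotonicity of the event in $u_1$ yields the bound
\[
\mathbb{P}_{\Sigma_n}\bigl(R_n(T)\geq 1\bigr)\leq \mathbb{P}_{\Sigma_n}\bigl(P_{(1)}\leq \alpha/n\bigr).
\]

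Finally, I would appeal directly to Corollary \ref{cor1}, which asserts that under the hypotheses $\liminf \rho_{ij}=\delta>0$ and $\mu^{\star}<\infty$, the right-hand side tends to zero as $n\to\infty$ under any configuration of true and false nulls. Since the bound holds uniformly in the configuration of nulls (the upper bound $u_1\leq \alpha/n$ comes from an FWER control condition evaluated under the global null, but the subsequent probability bound is applied directly on $\Sigma_n$ using Slepian's inequality and is valid for any $\mu$-configuration with $\sup\mu_i<\infty$), the conclusion follows.

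There is essentially no obstacle here; the work has already been done in Theorem \ref{thm3.2}, Corollary \ref{cor1}, and Theorem \ref{thm3.4}. The only subtlety worth flagging in the write-up is the conceptual point that $R_n(T)\geq 1 \Leftrightarrow P_{(1)}\leq u_1$ is a property specific to step-down rules (and would fail for step-up procedures, which is precisely why the analogous universal zero statement is not available for step-up methods, as the later sections of the paper emphasize).
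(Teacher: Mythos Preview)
Your proposal is correct and follows essentially the same route as the paper: the paper does not spell out a separate proof for Corollary~\ref{cor2}, but the argument is implicit in the chain of inequalities displayed in the proof of Theorem~\ref{thm3.3}, where $\mathbb{P}_{\Sigma_n}(R_n(T)\geq 1)=\mathbb{P}_{\Sigma_n}(P_{(1)}\leq u_1)\leq \mathbb{P}_{\Sigma_n}(P_{(1)}\leq \alpha/n)$ is already established en route via Theorem~\ref{thm3.4}, after which Corollary~\ref{cor1} finishes the job. Your observation that $\{R_n(T)\geq 1\}=\{P_{(1)}\leq u_1\}$ is a feature specific to step-down rules is exactly the structural point the paper is exploiting.
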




\section{Limiting FWER of Some Step-up Procedures}

Let us consider a step-down procedure $T_{1}$ and a step-up procedure $T_{2}$ having the identical vector of cutoffs $\mathbf{u} = (u_1, \ldots, u_n) \in \mathcal{S}_n$. We always have $m_1(T_{1}) \leq m_2(T_{2})$ where 
$m_1(T_{1})=\max \left\{i: P_{(j)} \leq u_j\right.$ for all $\left.j=1, \ldots, i\right\}$ and $m_2(T_{2})=\max \left\{i: P_{(i)} \leq u_{i}\right\}$. This implies that the step-up MTP is at least as rejective as the step-down MTP (which uses the same cutoffs). This observation steers that we might not get a similar \textit{universal asymptotic zero} result for the class of step-up MTPs as obtained in the case of step-down procedures (Theorem \ref{thm3.3}). This is indeed the case as we shall show in the next two subsections the following:
\begin{enumerate}
\item Under the equicorrelated Gaussian sequence model, the FWER of Hochberg procedure \cite{Hochberg} asymptotically approaches zero as the number of tests becomes arbitrarily large.
\item Under the equicorrelated Gaussian sequence model and under $H_0$, the Benjamini-Hochberg procedure \cite{BH} with a pre-specified FDR level controls FDR at some strictly
positive quantity which is a function of the chosen
FDR level and the common correlation, even when the number of tests approaches infinity.
\end{enumerate}
We have considered Hochberg’s MTP in particular because it uses the same vector of cutoffs as Holm's MTP (note that Holm's MTP has the ‘optimal’ critical values in the class of step-down procedures). Benjamini-Hochberg method, on the other hand, has been one of the most eminent MTPs proposed in the literature and also possesses some optimality properties both in frequentist and Bayesian paradigms of simultaneous inference (see \cite{Bogdan, Guo}).  
\subsection{Hochberg's Procedure}

Hochberg's \cite{Hochberg} MTP and Holm's sequentially rejective procedure use the same set of cutoffs; and hence, as mentioned earlier, Hochberg's method is sharper than Holm's MTP. Holm's MTP rejects a hypothesis only if its $p$-value and each of the smaller p-values are less than their corresponding cutoffs. Hochberg's method rejects all hypotheses with smaller or equal $p$-values to that of any one found less than its cutoff.

The following result depicts the limiting behavior of the FWER of Hochberg's procedure under the correlated Gaussian sequence model:

\begin{theorem}\label{thm4.1}
Consider the equicorrelated normal setup with correlation $\rho \in [0,1)$. Then, 
\begin{enumerate}
\item When $\rho=0$ (i.e., the independent normal setup), we have
$$\lim_{n \to \infty} FWER_{Hochberg}(n, \alpha, 0) \in  [1 - e^{-\alpha},\alpha]$$
under the global null hypothesis.
\item When $\rho \in (0,1)$, we have
$$\lim_{n \to \infty} FWER_{Hochberg}(n, \alpha, \rho) = 0$$
for any $\alpha \in (0,1/2)$, under the global null hypothesis.
\end{enumerate}
\end{theorem}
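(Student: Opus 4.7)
For Part~1, the plan is a two-sided sandwich. The upper bound $\limsup \le \alpha$ is immediate from the classical fact that Hochberg's procedure controls the FWER at level $\alpha$ under independence. For the lower bound, I observe that the strictest Hochberg cutoff $u_1 = \alpha/n$ coincides with the Bonferroni cutoff, so whenever $P_{(1)} \le \alpha/n$ the step-up procedure accepts the index $i = 1$ and therefore rejects; that is, $\{P_{(1)} \le \alpha/n\} \subseteq \{\text{Hochberg rejects}\}$. Under iid uniform $p$-values this containment yields
\[
FWER_{Hochberg}(n,\alpha,0) \;\ge\; \mathbb{P}_{0}\!\left(P_{(1)} \le \tfrac{\alpha}{n}\right) \;=\; 1 - (1-\alpha/n)^n \;\longrightarrow\; 1 - e^{-\alpha},
\]
delivering $\liminf \ge 1 - e^{-\alpha}$.

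For Part~2, the plan is to use the equicorrelated representation $X_i = \theta + Z_i$ with $\theta \sim \mathcal{N}(0, \rho)$ independent of $Z_i \overset{iid}{\sim} \mathcal{N}(0, 1-\rho)$. I first rewrite Hochberg rejection in counting form: the procedure rejects at least one null iff $\exists k \in \{1, \ldots, n\}$ such that $|\{i : X_i \ge c_{n-k+1}\}| \ge k$, where $c_m := \Phi^{-1}(1 - \alpha/m)$. Combining a union bound with conditioning on $\theta$ then yields
\[
FWER_{Hochberg}(n,\alpha,\rho) \;\le\; \mathbb{E}_{\theta}\!\left[\,\sum_{k=1}^n \mathbb{P}\bigl(\mathrm{Bin}\bigl(n,\, q_{n-k+1}(\theta)\bigr) \ge k\bigr)\right],
\]
with $q_m(\theta) := 1 - \Phi\bigl((c_m - \theta)/\sqrt{1-\rho}\bigr)$. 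Reducing the proof to showing that this double sum vanishes is the central reformulation.

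My strategy for the binomial tails is to partition the index $k$ into three regimes. For $k$ close to $1$ (equivalently, $m = n-k+1$ close to $n$), Markov's inequality together with the Bonferroni-type estimate $n q_n(\theta) \sim \alpha^{1/(1-\rho)} n^{-\rho/(1-\rho)} \to 0$ (the same estimate underlying Corollary~\ref{cor1}) handles the contribution. For $k$ in the bulk, Chernoff's inequality provides geometric decay, since $n q_m(\theta) \ll n-m+1$ on the favorable $\theta$-set. For $k$ close to $n$ (so $m$ close to $1$), I would use the elementary bound $\mathbb{P}(\mathrm{Bin}(n, q_m) \ge n-m+1) \le q_m(\theta)^{n-m+1}$, which is geometric in $n-m+1$ as long as $q_m(\theta)$ is kept strictly below $1$. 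The hypothesis $\alpha < 1/2$ enters precisely at this stage: it guarantees $c_1 = \Phi^{-1}(1-\alpha) > 0$, so that $q_1(\theta) < 1/2$ whenever $\theta \le 0$, yielding a geometric factor strictly less than one on a set of $\theta$-measure bounded away from zero. Tails of $\theta$ are absorbed by splitting at $|\theta| \le T_n$ with $T_n \to \infty$ slowly and invoking the Gaussian tail of $\mathcal{N}(0, \rho)$ on the complement.

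The main obstacle I anticipate is assembling the three regime bounds so that the sum over $k$ (with up to $n$ terms) still vanishes, and tracking the uniformity in $\theta$ on the compact piece carefully enough to close the argument by dominated convergence. The transitional zone between the Chernoff and $q_m^{n-m+1}$ estimates, where neither bound is tight, is where the bookkeeping will be most delicate, and I suspect this is precisely why the hypothesis $\alpha < 1/2$, rather than merely $\alpha < 1$, is required.
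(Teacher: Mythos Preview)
Your Part~1 argument coincides with the paper's: the same Bonferroni-type lower bound via $\{P_{(1)}\le\alpha/n\}$ and the known Hochberg FWER upper bound.

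For Part~2 your route is genuinely different. The paper does not use a union bound plus binomial concentration; instead it works directly with the order statistics. Writing $X_i = U + Z_i$ with $U\sim N(0,\rho)$ independent of iid $Z_i\sim N(0,1-\rho)$, the paper replaces $Z_{(n-i+1)}$ by its in-probability quantile limit $\sqrt{1-\rho}\,\Phi^{-1}(1-i/n)$, so that the event $\{P_{(i)}\le \alpha/(n-i+1)\}$ becomes, after dividing by the negative number $\Phi^{-1}(\alpha/(n-i+1))$ (this is where $\alpha<1/2$ enters for the paper) and letting the $U$-term vanish, the purely deterministic inequality
\[
\frac{\Phi^{-1}(i/n)}{\Phi^{-1}\!\bigl(\alpha/(n-i+1)\bigr)}\;\ge\;\frac{1}{\sqrt{1-\rho}}.
\]
A short algebraic step then forces $i(n-i+1)<\alpha n$, which fails for every $i\in\{1,\dots,n\}$, so no Hochberg index survives in the limit. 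The paper's argument is much shorter and avoids all tail bookkeeping, though as written it is somewhat informal (the quantile convergence is only in probability, uniformity over $i$ is not addressed, and the boundary indices where $i/n\to 0$ or $1$ are not treated separately). Your binomial-tail decomposition is heavier but, once the regimes are stitched together, delivers a fully quantitative bound and is arguably more robust. Two small points: your stated estimate $\mathbb{P}(\mathrm{Bin}(n,q_m)\ge n-m+1)\le q_m^{\,n-m+1}$ is missing the combinatorial prefactor $\binom{n}{m-1}$ (harmless, since polynomial growth is swamped by the geometric decay); and the truncation of $\theta$ can be done at a \emph{fixed} level $T$ first, sending $n\to\infty$ and only afterwards $T\to\infty$, which sidesteps the uniformity worry you raise about a growing window $T_n$.
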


\begin{proof}[\textbf{\upshape Proof of Theorem \ref{thm4.1}.}]
We have, under the global null, 
\begin{align*}
FWER_{Hochberg}(n, \alpha, 0)
= & \mathbb{P}_{I_{n}}\left[\bigcup_{i=1}^n\left\{P_{(i)} \leqslant \frac{\alpha}{n-i+1}\right\}\right] \\
\geqslant & \mathbb{P}_{I_n}\left[P_{(1)} 
\leqslant \frac{\alpha}{n}\right] \\
\underset{n \rightarrow \infty}{\longrightarrow}&  1-e^{-\alpha}.
\end{align*}
Also, Hochberg's procedure controls FWER at level $\alpha$ \cite{Hochberg}.
So, $$1-e^{-\alpha} \leqslant \lim_{n \rightarrow \infty} FWER_{Hochberg }(0) \leqslant \alpha.$$
Also, $\lim_{\alpha \rightarrow 0} \frac{1-e^{-\alpha}}{\alpha}=1$. thus,  we have, as $\alpha \rightarrow 0$,
$\lim _{n \rightarrow \infty} \frac{FWER_{Hochberg}(0)}{\alpha}=1$. This completes the proof of the first part. 

\noindent When $\rho \in (0,1)$, we have,
$$
\begin{aligned}
& P_{(i)} \leqslant \frac{\alpha}{n-i+1} \\
\Longleftrightarrow & 1-\frac{\alpha}{n-i+1} \leq \Phi\left(X_{(n-i+1)}\right) \\
\Longleftrightarrow & \Phi^{-1}\left(1-\frac{\alpha}{n-i+1}\right) \leqslant U+Z_{(n-i+1)} \\
\Longleftrightarrow & \Phi^{-1}\left(1-\frac{\alpha}{n-i+1}\right) \leqslant U+\sqrt{1-\rho}\cdot \Phi^{-1}\left(1-\frac{i}{n}\right) \quad \text{(for all sufficiently large values of $n$).}
\end{aligned}
$$
Therefore, for all sufficiently large values of $n$, we have
$$
\begin{aligned}
& P_{(i)} \leqslant \frac{\alpha}{n-i+1} \\
\Longleftrightarrow & -U-\sqrt{1-\rho}\cdot \Phi^{-1}\left(1-\frac{i}{n}\right) \leqslant-\Phi^{-1}\left(1-\frac{\alpha}{n-i+1}\right) \\
\Longleftrightarrow & -U+\sqrt{1-\rho} \cdot \Phi^{-1}\left(\frac{i}{n}\right) \leqslant \Phi^{-1}\left(\frac{\alpha}{n-i+1}\right) \\
\Longleftrightarrow & \frac{-U}{\Phi^{-1}\left(\frac{\alpha}{n-i+1}\right)}+\frac{\sqrt{1-\rho} \cdot \Phi^{-1}\left(\frac{i}{n}\right)}{\Phi^{-1}\left(\frac{\alpha}{n-i+1}\right)} \geqslant 1 \quad \text{(since $\alpha \in (0,1/2))$}\\
\Longleftrightarrow & \displaystyle \lim_{n \to \infty} \frac{\Phi^{-1}\left(\frac{i}{n}\right)}{\Phi^{-1}\left(\frac{\alpha}{n-i+1}\right)} \geq \frac{1}{\sqrt{1-\rho}}.
\end{aligned}
$$
Thus, we have $i/n < 1/2$, because otherwise the limiting ratio of $\Phi^{-1}\left(\frac{i}{n}\right)$ and $\Phi^{-1}\left(\frac{\alpha}{n-i+1}\right)$ can not be positive. So, we have
$$\frac{i}{n} < \frac{\alpha}{n-i+1} <1/2.$$
This implies $i(n-i+1) < \alpha\cdot n$. But this is not valid for any value of $i$ in $\{1, \ldots, n\}$. Consequently, the limiting FWER is zero. \end{proof}

We now consider the free-combination condition \cite{Holm} under which any combination of the true and false hypotheses is possible.

\begin{theorem}\label{thm4.2}
Consider the multiple testing problem under the equicorrelated normal setup with equicorrelation $\rho\in (0,1)$. Suppose $\sup \mu_{i}$ is finite. Then, 
$$\lim_{n \to \infty} FWER_{Hochberg}(n, \alpha, \rho) = 0.$$
for any $\alpha \in (0,1/2)$.
\end{theorem}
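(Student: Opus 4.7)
My plan is to reduce Theorem \ref{thm4.2} to the global-null case (Theorem \ref{thm4.1}, part 2) by a stochastic domination argument that absorbs the non-null means into an additive constant. As in the proof of Theorem \ref{thm3.2}, I would first bound
\[
FWER_{Hochberg}(n,\alpha,\rho) \leq \mathbb{P}\bigl(R_n(Hochberg) \geq 1\bigr) = \mathbb{P}\Bigl(\bigcup_{i=1}^n \{P_{(i)} \leq \alpha/(n-i+1)\}\Bigr).
\]
Using the equicorrelated representation $X_j = \theta + Z_j + \mu_j$ with $\theta \sim N(0,\rho)$ and $Z_j \overset{iid}{\sim} N(0,1-\rho)$ independent of $\theta$, the inequality $\mu_j \leq \mu^*$ gives the pointwise bound $X_j \leq \mu^* + (\theta + Z_j)$. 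Since $\mu^* + \theta$ is a common shift across $j$, this passes to order statistics: $X_{(n-i+1)} \leq \mu^* + \theta + Z_{(n-i+1)}$ for every $i$.

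Converting the Hochberg rejection condition into $X$-form via $P_{(i)} \leq \alpha/(n-i+1) \iff X_{(n-i+1)} \geq \Phi^{-1}(1-\alpha/(n-i+1))$ and applying the above bound, each event in the union is contained in $\{\theta + Z_{(n-i+1)} \geq \Phi^{-1}(1-\alpha/(n-i+1)) - \mu^*\}$. This is exactly the inequality that appears in the proof of Theorem \ref{thm4.1}, part 2, except that the threshold is shifted by the bounded constant $\mu^*$. I would then replicate that argument: substituting the asymptotic $Z_{(n-i+1)} \approx \sqrt{1-\rho}\,\Phi^{-1}(1-i/n)$, applying $\Phi^{-1}(1-x) = -\Phi^{-1}(x)$, and dividing by the negative quantity $\Phi^{-1}(\alpha/(n-i+1))$ converts the event (for each fixed $i = i(n)$) into the asymptotic requirement
\[
\frac{-\theta - \mu^*}{\Phi^{-1}(\alpha/(n-i+1))} + \frac{\sqrt{1-\rho}\,\Phi^{-1}(i/n)}{\Phi^{-1}(\alpha/(n-i+1))} \geq 1.
\]
Whenever $n - i + 1 \to \infty$, the denominator diverges to $-\infty$ and the bounded term $-\theta - \mu^*$ drops out, so the analysis of Theorem \ref{thm4.1} goes through verbatim and forces the impossible relation $i(n-i+1) < \alpha n$ for $i \in \{1,\ldots,n\}$ under $\alpha < 1/2$.

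The main obstacle I anticipate is the boundary regime in which $n-i+1$ stays bounded as $n$ grows, so that $\Phi^{-1}(\alpha/(n-i+1))$ remains bounded and the $\mu^*$-shift does not vanish in the displayed ratio. However, in this regime $i/n \to 1$ forces $\Phi^{-1}(i/n) \to +\infty$ while $\Phi^{-1}(\alpha/(n-i+1))$ stays bounded and negative, so the leading term $\sqrt{1-\rho}\,\Phi^{-1}(i/n)/\Phi^{-1}(\alpha/(n-i+1)) \to -\infty$ and the displayed inequality cannot hold for large $n$. Pasting the two regimes together uniformly over $i \in \{1,\ldots,n\}$ gives $\lim_n \mathbb{P}(R_n(Hochberg) \geq 1) = 0$, completing the proof.
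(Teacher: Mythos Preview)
Your proposal is correct and follows essentially the same route as the paper: bound the FWER by $\mathbb{P}(R_n(\text{Hochberg})\ge 1)$, use the equicorrelated decomposition and $\mu_j\le\mu^\star$ to dominate the order statistics by $\theta+Z_{(n-i+1)}+\mu^\star$, and then rerun the global-null computation of Theorem~\ref{thm4.1}(2) with the harmless additive constant $\mu^\star$. Your explicit treatment of the boundary regime $n-i+1$ bounded is a nice addition that the paper leaves implicit.
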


\begin{proof}[\textbf{\upshape Proof of Theorem \ref{thm4.2}.}]
Suppose $\mathcal{A}$ denote the set of indices from $\{1,\ldots,n\}$ for which $H_{0i}$ is true. We have, 
\begin{align*}
& FWER_{Hochberg}(n, \alpha, \rho) \\
= &\mathbb{P}_{\Sigma_{n}}\bigg(V_{n}(Hochberg) \geq 1\bigg) \\
\leq &\mathbb{P}_{\Sigma_{n}}\bigg(R_{n}(Hochberg) \geq 1\bigg) \\
=  & \mathbb{P}_{M_{n}(\rho)}\left[\bigcup_{i=1}^n\left\{P_{(i)} \leqslant \frac{\alpha}{n-i+1}\right\}\right]\\
= & \mathbb{P}_{M_{n}(\rho)}\left[\bigcup_{i=1}^n\left\{\Phi^{-1}\left(1-\frac{\alpha}{n-i+1}\right) \leq U + Z_{(n-i+1)}\right\}\right] \\
= &\mathbb{P}_{M_{n}(\rho)}\left[\bigcup_{i \in \mathcal{A}}\left\{c_{\alpha, n-i+1} \leq U + Z_{(n-i+1)}\right\} \bigcup \bigcup_{i \in \mathcal{A}^{c}}\left\{c_{\alpha, n-i+1} \leq U + Z_{(n-i+1)}\right\}\right].
\end{align*}
In the last step above, $c_{\alpha, n-i+1}$ denotes $\Phi^{-1}\left(1-\frac{\alpha}{n-i+1}\right)$.

Here, $Z_{i} = \mu_{i} +V_{i}$ where $V_{i}$'s are i.i.d $N(0,1-\rho)$ variables, $\mu_{i}$ is zero for $i \in \mathcal{A}$ and is strictly positive otherwise. So, $Z_{i}$ always lies in $[V_{i}, V_{i} +\sup \mu_i]$. This implies, for all $i \in \{1, \ldots, n\}$,
$$Z_{(n-i+1)} \leq V_{(n-i+1)} +\sup \mu_i.$$
Hence, we have
$$FWER_{Hochberg}(n, \alpha, \rho) \leq \mathbb{P}_{M_{n}(\rho)}\left[\bigcup_{i=1}^n\left\{\Phi^{-1}\left(1-\frac{\alpha}{n-i+1}\right) \leq U + V_{(n-i+1)} + \sup \mu_{i}\right\}\right].$$

 Proceeding exactly in the same way as in the earlier proof, one obtains that the upper bound tends to zero asymptotically, and consequently, the limiting FWER is zero.  \end{proof}

 Note that, we also have the following result regarding the number of rejections:

 \begin{corollary}\label{cor3}
Consider the equicorrelated normal setup with equicorrelation $\rho\in (0,1)$. Suppose $\sup \mu_{i}$ is finite. Then, 
$$\lim_{n \to \infty} \mathbb{P}_{M_n(\rho)}\bigg(R_n(Hochberg) \geq 1\bigg)= 0.$$
for any $\alpha \in (0,1/2)$ and under any configuration of true and false null hypotheses.
\end{corollary}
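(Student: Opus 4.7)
The plan is to extract the result directly from the proof of Theorem \ref{thm4.2} without any new computation. The first inequality in that proof is $\mathbb{P}_{\Sigma_n}(V_n(\text{Hochberg}) \geq 1) \leq \mathbb{P}_{\Sigma_n}(R_n(\text{Hochberg}) \geq 1)$, and every subsequent line is in fact an upper bound on the right-hand quantity rather than on the FWER. Since the final upper bound is shown to vanish as $n \to \infty$, the corollary follows immediately. This mirrors the way Corollary \ref{cor1} was obtained from the proof of Theorem \ref{thm3.2}.

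Concretely, I would begin from
$$\mathbb{P}_{M_n(\rho)}(R_n(\text{Hochberg}) \geq 1) = \mathbb{P}_{M_n(\rho)}\!\left[\bigcup_{i=1}^n \left\{P_{(i)} \leq \frac{\alpha}{n-i+1}\right\}\right],$$
use the exchangeable decomposition $X_i = U + \mu_i + V_i$ with $U \sim \mathcal{N}(0,\rho)$ and $V_i \stackrel{\text{iid}}{\sim} \mathcal{N}(0, 1-\rho)$, and apply the order-statistic bound $Z_{(n-i+1)} \leq V_{(n-i+1)} + \mu^{\star}$, valid whenever $\mu^{\star} = \sup \mu_i < \infty$. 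This recasts the event as a union of the events $c_{\alpha, n-i+1} - \mu^{\star} \leq U + \sqrt{1-\rho}\,\Phi^{-1}(1 - i/n)$, with $c_{\alpha, n-i+1} = \Phi^{-1}(1 - \alpha/(n-i+1))$. The ratio argument of Theorem \ref{thm4.1} then forces the asymptotic relation
$$\lim_{n \to \infty} \frac{\Phi^{-1}(i/n)}{\Phi^{-1}(\alpha/(n-i+1))} \geq \frac{1}{\sqrt{1-\rho}},$$
which in turn requires $i(n-i+1) < \alpha n$ for $\alpha < 1/2$, an inequality incompatible with any $i \in \{1,\ldots,n\}$.

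No step of the argument genuinely required restricting to false rejections; the bound $Z_{(n-i+1)} \leq V_{(n-i+1)} + \mu^{\star}$ holds under any configuration of true and false nulls, and the finite shift $\mu^{\star}$ contributes only an $O(1)$ additive term which is asymptotically negligible against $c_{\alpha, n-i+1} \to \infty$. There is therefore no genuine obstacle---the corollary follows essentially for free, and the only real work is the notational observation that the chain of bounds in the proof of Theorem \ref{thm4.2} establishes the stronger statement as a by-product.
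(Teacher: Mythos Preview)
Your proposal is correct and matches the paper's approach exactly: the paper does not give a separate proof of Corollary \ref{cor3} but simply remarks that it follows from the proof of Theorem \ref{thm4.2}, since every estimate after the first line there is in fact a bound on $\mathbb{P}_{M_n(\rho)}(R_n(\text{Hochberg}) \geq 1)$ rather than on the FWER. Your explicit identification of this and your tracing through the order-statistic bound and the ratio argument of Theorem \ref{thm4.1} are precisely the intended reading.
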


\subsection{Benjamini-Hochberg Procedure}
The Benjamini-Hochberg method is the first FDR controlling procedure \cite{FDR2007}. Originally shown to be a valid FDR controlling method for independent $p$-values in \cite{BH}, it controls the FDR even if the test statistics exhibit some special dependence structure (e.g., the positive regression dependent setup). Formal treatments of these conditions
and proofs can be found in \cite{BY} and \cite{Sarkar2002}. Let $i_{\max }$ be the largest such $i$ for which $p_{(i)} \leqslant i \alpha/n$. The BH procedure rejects $H_{0(i)}$ if $i \leqslant i_{\max }$ and accepts $H_{0(i)}$ otherwise. The authors in \cite{deybhandari} evaluated the limiting FDR of Benjamini-Hochberg method. However, their proof had a technical gap. We give the revised statement along with the correct proof below:
\begin{theorem}\label{thmBH} Consider the multiple testing problem under the equicorrelated normal setup with correlation $\rho$. Then, under the global null, for all $\alpha \in (0,1)$ and for all $\rho \in (0,1)$, 
$$\lim_{n \to \infty} FDR_{BH}(n, \alpha, \rho) =  1 - \Phi\left[\inf_{t \in  (0,1)} \frac{\Phi^{-1}\left(1-t \alpha\right)-\sqrt{1-\rho} \cdot \Phi^{-1}(1-t)}{\sqrt{\rho}}\right]>0.$$
Also,
$\lim_{n \to \infty} FDR_{BH}(n, \alpha, 0) =\lim_{n \to \infty} FDR_{BH}(n, \alpha, 1) =\alpha$.
\end{theorem}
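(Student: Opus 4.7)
Under the global null every rejection is a false discovery, so $V_n(BH)=R_n(BH)$ and hence
$$FDR_{BH}(n,\alpha,\rho) = \mathbb{P}_{M_n(\rho)}(R_n(BH) \geq 1).$$
The plan is to evaluate this rejection probability in the limit by conditioning on the common component in the equicorrelated representation $X_i=\theta+Z_i$. Writing $\theta=\sqrt{\rho}\,U$ with $U\sim N(0,1)$ and $Z_i=\sqrt{1-\rho}\,W_i$ with $W_i\overset{iid}{\sim}N(0,1)$ independent of $U$, we have $X_i=\sqrt{\rho}\,U+\sqrt{1-\rho}\,W_i$, so conditionally on $U=u$ the $p$-values $P_i=1-\Phi(X_i)$ are i.i.d.\ with common distribution function
$$G_u(t) = \Phi\!\left(\frac{\sqrt{\rho}\,u - \Phi^{-1}(1-t)}{\sqrt{1-\rho}}\right), \quad t\in(0,1).$$

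Next, the BH rejection event is re-expressed via the empirical CDF $\hat{G}_n$ of $P_1,\ldots,P_n$. Since $P_{(i)}\le i\alpha/n$ is equivalent to $\hat{G}_n(i\alpha/n)\ge i/n$, the event $\{R_n(BH)\ge 1\}$ is precisely $\{\hat{G}_n(t\alpha)\ge t\text{ for some }t\in\{1/n,\ldots,1\}\}$. Applied conditionally on $U=u$, the Glivenko--Cantelli theorem gives $\sup_t|\hat{G}_n(t\alpha)-G_u(t\alpha)|\to 0$ almost surely, and combined with the continuity and strict monotonicity of $G_u$ the conditional rejection probability converges to $1$ when $\sup_{t\in(0,1)}(G_u(t\alpha)-t)>0$ and to $0$ when this supremum is negative. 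A direct rearrangement using $\Phi^{-1}(t)=-\Phi^{-1}(1-t)$ then shows
$$G_u(t\alpha)\ge t \iff u \ge \frac{\Phi^{-1}(1-t\alpha)-\sqrt{1-\rho}\,\Phi^{-1}(1-t)}{\sqrt{\rho}},$$
so the conditional limit is $\mathbf{1}\{u\ge c_\rho\}$ with $c_\rho$ equal to the infimum appearing in the theorem. A brief tail analysis (the term $\Phi^{-1}(1-t\alpha)\sim\sqrt{2\log(1/t)}$ dominates $\sqrt{1-\rho}\,\Phi^{-1}(1-t)$ as $t\to 0^+$, and $-\Phi^{-1}(1-t)\to+\infty$ as $t\to 1^-$) confirms that $c_\rho$ is finite and attained in the interior of $(0,1)$. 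Bounded convergence with respect to the $N(0,1)$ law of $U$ then yields
$$\lim_{n\to\infty}FDR_{BH}(n,\alpha,\rho)=\mathbb{P}(U\ge c_\rho)=1-\Phi(c_\rho)>0.$$

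For the boundary values I would argue separately. When $\rho=0$, the classical independence result of \cite{BH} gives the exact identity $FDR_{BH}(n,\alpha,0)=\alpha$ under the global null for every $n$, hence in the limit. When $\rho=1$, all $X_i$ coincide almost surely with $U$, so all $p$-values are equal and BH rejects iff their common value is $\le\alpha$, which occurs with probability $\alpha$. The main technical obstacle will be making the Glivenko--Cantelli passage from the discrete existential quantifier over $i\in\{1,\ldots,n\}$ to the continuous supremum over $t\in(0,1)$ fully rigorous: one has to control the ratio $\hat{G}_n(t\alpha)/t$ near the endpoints where $G_u$ flattens, and justify the interchange of the conditional limit with the outer integral over $U$. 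The null event $\{U=c_\rho\}$ causes no difficulty since it has zero probability under the standard normal law. This careful handling of the endpoints is precisely where I expect the technical gap in the earlier proof of \cite{deybhandari} to have occurred, and closing it is the crux of the argument.
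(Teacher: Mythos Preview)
Your proposal is correct and follows essentially the same strategy as the paper: both reduce FDR to FWER under the global null, use the equicorrelated decomposition $X_i=\sqrt{\rho}\,U+\sqrt{1-\rho}\,W_i$, and identify the limiting rejection event as $\{U\ge c_\rho\}$ with $c_\rho$ the infimum in the statement; the boundary cases $\rho=0,1$ are handled identically. The one technical difference is that the paper works directly with the order statistics $Z_{(n-nt+1)}$ and invokes their pointwise convergence in probability to the population quantiles $\sqrt{1-\rho}\,\Phi^{-1}(1-t)$, then passes the limit through the union $\bigcup_{i=1}^n$ somewhat informally, whereas you condition on $U$ and use Glivenko--Cantelli on the empirical CDF of the i.i.d.\ $p$-values. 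Your route gives uniform convergence for free and makes the passage from the discrete union to $\sup_{t\in(0,1)}$ cleaner; your explicit tail analysis showing $c_\rho$ is finite and attained in the interior, together with bounded convergence over $U$, is also more careful than the paper's one-line bound $\inf_t s(t)\le s(1/2)<\infty$.
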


\begin{proof}[\textbf{\upshape Proof of Theorem \ref{thmBH}.}]
We have
$$F D R_{BH} =\mathbb{E}\left[\frac{V_{n}(BH)}{\max \left\{R_{n}(BH), 1\right\}}\right]=\mathbb{E}\left[\frac{V_{n}(BH)}{R_{n}(BH)} \mid V_{n}(BH)>0\right] \mathbb{P}\left(V_{n}(BH)>0\right).$$

\noindent Under the global null $H_{0}$, all $R_{n}$ rejected hypotheses are false rejections, hence 

\noindent $V_{n}(BH) / R_{n}(BH)=1$ and FDR equals FWER. We shall work with $FWER$ for the rest of this proof.

Suppose exactly $n_0$ null hypotheses are true. Then, it is a well-known fact \cite{BH, Efron2010book, Sarkar2002} that, under the independent setup,
$$FDR_{BH}(n, \alpha, \rho) = \frac{n_{0}}{n}\alpha.$$

\noindent So, under the global null, $FDR_{BH}(n, \alpha, 0) = FWER_{BH}(n, \alpha, 0) =\alpha$. Now, 

$$\begin{aligned}
p_{(i)} \leqslant \frac{i \alpha}{n} \Longleftrightarrow &  1-\Phi\left(X_{(n-i+1)}\right) \leqslant \frac{i \alpha}{n} \\
\Longleftrightarrow &  1-\frac{i \alpha}{n} \leqslant \Phi\left(X_{(n-i+1)}\right) \\
\Longleftrightarrow &  X_{(n-i+1)} \geqslant \Phi^{-1}\left(1-\frac{i \alpha}{n}\right).
\end{aligned}$$

\noindent Consequently, 
\begin{align*}
FWER_{B H}(\rho)
= & \mathbb{P}_{M_{n}(\rho)}\left[\bigcup_{i=1}^{n}\left\{P_{(i)}<\frac{i \alpha}{n}\right\}\right]
= \mathbb{P}_{M_{n}(\rho)}\left[\bigcup_{i=1}^{n}\left\{ X_{(n-i+1)} \geqslant \Phi^{-1}\left(1-\frac{i \alpha}{n}\right) \right\}\right].
\end{align*}

\noindent When $\rho=1$, $X_{i}=X_{j}$ w.p $1$. This implies
\begin{align*}
FWER_{B H}(\rho)
= \mathbb{P}\left[ \bigcup_{i=1}^{n} \left\{ X \geqslant \Phi^{-1}\left(1-\frac{i \alpha}{n}\right) \right\}\right] = & \mathbb{P}\left[ X \geqslant \Phi^{-1}\left(1-\alpha\right) \right] = \alpha \hspace{1.5mm} (\text{$X \sim N(0,1)$})
\end{align*}

\noindent Consider the case $0<\rho<1$ now. Then, $X_{i} = U_{\rho}+ Z_{i}$ where $U_{\rho} \sim N(0, \rho)$ is independent of $Z_{i} \sim N(0,1-\rho)$. Here $Z_{i}$'s are i.i.d. So, under the global null, 
$$
\begin{aligned}
FWER_{B H}(\rho)
= & \mathbb{P}_{M_{n}(\rho)}\left[\bigcup_{i=1}^{n}\left\{ X_{(n-i+1)} \geqslant \Phi^{-1}\left(1-\frac{i \alpha}{n}\right)\right\}\right]\\
= & \mathbb{P}_{M_{n}(\rho)}\left[\bigcup_{i=1}^{n}\left\{ U_{\rho}+Z_{(n-i+1)} \geqslant \Phi^{-1}\left(1-\frac{i \alpha}{n}\right)\right\}\right]\\
= & \mathbb{P}_{M_{n}(\rho)}\left[\bigcup_{i=1}^{n}\left\{U_{\rho}>\Phi^{-1}\left(1-t_{i} \alpha\right)-Z_{\left(n-n t_{i}+1\right)}\right\}\right] \quad \text{where} \hspace{2mm} t_{i}=i/n .  \\
= & \mathbb{P}_{M_{n}(\rho)}\left[\bigcup_{i=1}^{n}\left\{M>\frac{\Phi^{-1}\left(1-t_{i} \alpha\right)-Z_{\left(n-n t_{i}+1\right)}}{\sqrt{\rho}}\right\}\right],
\end{aligned}
$$
where in the last step above, $M = U_{\rho}/\sqrt{\rho} \sim N(0,1)$. For $t \in (0,1)$,
$Z_{(n-nt+1)}=Z_{\left(n\left(1-t+\frac{1}{n}\right)\right)}$ converges in probability to $(1-t)$'th quantile of the distribution of $Z_{i}$ (i.e. $\sqrt{1-\rho} \cdot \Phi^{-1}(1-t)$) as $n \rightarrow \infty$. So,
\begin{align*}
    \lim_{n \to \infty} FWER_{B H}(\rho) & = \mathbb{P}\left[\bigcup_{t \in (0,1)}\left\{M>\frac{\Phi^{-1}\left(1-t \alpha\right)-\sqrt{1-\rho} \cdot \Phi^{-1}(1-t)}{\sqrt{\rho}}\right\}\right] \\
    & = \mathbb{P}\left[\bigcup_{t \in (0,1)}\left\{M > s(t)\right\}\right] \quad \text{(say)} \\
    & = \mathbb{P}\left[M>\inf_{t \in  (0,1)} s(t)\right] \\
    & = 1 - \Phi\left[\inf_{t \in  (0,1)} s(t)\right].
\end{align*}
Now, $\displaystyle\inf_{t \in  (0,1)} s(t) \leq s(.5) <\infty$. So, $\displaystyle \Phi\left[\inf_{t \in  (0,1)} s(t)\right] <1$. Thus, $\displaystyle \lim_{n \to \infty} FWER_{B H}(\rho) >0$ for $\rho \in (0,1)$.
\end{proof}
\begin{remark}
Since we are considering the infimum of the function $s(\cdot)$ and since $s(0)=\infty=s(1)$, the previous result still holds good if one considers the closed interval $[0,1]$ in place of the open interval $(0,1)$.
\end{remark}
\begin{remark}
The authors in \cite{FDR2007} studied the (limiting) empirical distribution function of the $p$-values and used those to study limiting behaviors of FDR. Their results are derived under general distributional setups and different values of $\xi_{n}$ where $\xi_{n}$ denotes the proportion of the true nulls. Our elementary proof, in contrast, uses standard analytic tools and provides a simple closed-form expression for the limiting FDR under the global null.
\end{remark}

\subsection{Other Step-up Procedures}
We have discussed the limiting FWER values of two step-up procedures so far. In this subsection, we shall provide an upper bound on the limiting FWER of any step-up procedure satisfying some properties. Towards this, we discuss a special dependency property of test statistics introduced by Benjamini and Yekutieli \cite{BY}. They referred to this property as \textit{positive regression dependency on each one from a subset $\mathcal{A}$}, or PRDS on $\mathcal{A}$. The notion of PRDS involves increasing sets.

\begin{definition}
A set $D \subset \mathbb{R}^{k}$ is called an \textit{increasing set} if $\mathbf{a} \in D$ and $\mathbf{b} \geq \mathbf{a}$ imply that $\mathbf{b} \in D$.
\end{definition}

\begin{definition}
\textit{Property PRDS.} Let $\mathbf{X}=\left(X_1, X_2, \ldots, X_n\right)$ be the vector of test statistics. We say that the PRDS property holds on $\mathcal{A}$ if for any increasing set $D$, and for each $i \in \mathcal{A}$, $\mathbb{P}\left\{\mathbf{X} \in D \mid X_i=x\right\}$ is nondecreasing in $x$.
\end{definition}

Benjamini and Yekutieli \cite{BY} established that the Benjamini-Hochberg method controls the FDR under the PRDS property. Let $\mathbf{\alpha}_T = (\alpha_1,\ldots,\alpha_n) \in \mathcal{S}_n$ denote the vector of critical values of the step-up MTP $T$. Guo and Rao \cite{Guo} showed the following. 

\begin{lemma}
Let $T$ be any step-up MTP having vector of critical values $\mathbf{\alpha}_T \in \mathcal{S}_n$. The following inequality holds under the PRDS property:
\begin{equation} \label{eq2}
\sum_{k=1}^n \mathbb{P}\left(R_{n}(T)=k \mid P_i \leq \alpha_k\right) \leq 1, \quad \text { for } i \in \mathcal{A} .
\end{equation}
Moreover, the above inequality becomes an equality under the independence of the test statistics.
\end{lemma}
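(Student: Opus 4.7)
The plan is to convert the sum into a telescoping expression that collapses thanks to a PRDS-induced monotonicity, using a leave-one-out surrogate for $R_n(T)$. Fix $i \in \mathcal{A}$ and define $R^{-i}$ to be the number of rejections produced by the step-up procedure applied to the $n-1$ $p$-values $\{P_j : j \neq i\}$ with the shifted cutoff vector $(\alpha_2, \alpha_3, \ldots, \alpha_n)$. Writing $P^{-i}_{(1)} \leq \cdots \leq P^{-i}_{(n-1)}$ for the ordered reduced $p$-values, this means $R^{-i} = \max\{m : P^{-i}_{(m)} \leq \alpha_{m+1}\}$, with the convention $R^{-i} = 0$ if no such $m$ exists.

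The crux of the argument is the pathwise identity
\[
\{R_n(T) = k,\; P_i \leq \alpha_k\} \;=\; \{R^{-i} = k - 1,\; P_i \leq \alpha_k\}, \qquad k = 1, \ldots, n,
\]
which I would verify by order-statistic bookkeeping. If $R_n(T) = k$ and $P_i \leq \alpha_k$, then exactly $k$ of the $n$ $p$-values lie at or below $\alpha_k$ and $P_i$ is among them, so removing $P_i$ leaves $k-1$ values below $\alpha_k$; meanwhile the strict inequalities $P_{(m)} > \alpha_m$ for $m > k$ re-index to $P^{-i}_{(l)} > \alpha_{l+1}$ for $l \geq k$, forcing $R^{-i} = k-1$. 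The converse direction repackages the same data.

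Using the identity together with $\mathbb{P}(P_i \leq \alpha_k) = \alpha_k$ for $i \in \mathcal{A}$, I set $h_j(t) = \mathbb{P}(R^{-i} \geq j \mid P_i \leq t)$ and rewrite
\[
\mathbb{P}(R_n(T) = k \mid P_i \leq \alpha_k) = h_{k-1}(\alpha_k) - h_k(\alpha_k).
\]
Summing over $k$ telescopes, after noting $h_0 \equiv 1$ and $h_n \equiv 0$ (since $R^{-i} \leq n-1$), to yield
\[
\sum_{k=1}^n \mathbb{P}(R_n(T) = k \mid P_i \leq \alpha_k) = 1 + \sum_{k=1}^{n-1} \bigl(h_k(\alpha_{k+1}) - h_k(\alpha_k)\bigr).
\]

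The final ingredient is PRDS. Since $\{R^{-i} \geq j\}$ is an increasing set in the vector of test statistics, PRDS on $\mathcal{A}$ implies that $\mathbb{P}(R^{-i} \geq j \mid P_i = p)$ is nonincreasing in $p$, and because $P_i \sim \text{Uniform}[0,1]$ under $H_{0i}$, an averaging argument (the mean of a nonincreasing function over $[0, t]$ is nonincreasing in $t$) shows that $t \mapsto h_j(t)$ is itself nonincreasing. With $\alpha_{k+1} \geq \alpha_k$, every bracketed difference above is nonpositive, proving $\leq 1$. Under independence each $h_j$ is constant in $t$, the telescoping differences vanish, and equality holds. The main technical obstacle is the leave-one-out identity, because the rank of $P_i$ among all $n$ values can be anywhere in $\{1, \ldots, k\}$ and the ordering must be tracked carefully upon removal; once the identity is secured, the telescoping sum and the PRDS monotonicity combine in a single short calculation.
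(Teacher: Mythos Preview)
The paper does not prove this lemma; it is quoted from Guo and Rao and used as a black box. Your argument is correct and is essentially the standard proof from that reference (and, before it, the arguments of Benjamini--Yekutieli and Sarkar): the leave-one-out identity $\{R_n(T)=k,\,P_i\le\alpha_k\}=\{R^{-i}=k-1,\,P_i\le\alpha_k\}$ converts the sum into a telescope, and PRDS supplies the monotonicity of $t\mapsto h_j(t)$ that makes the residual terms nonpositive. The identity verification you sketch is sound once you observe that $P_{(k+1)}>\alpha_{k+1}\ge\alpha_k\ge P_i$ forces $P_i$ to sit among the $k$ smallest $p$-values, after which the re-indexing $P^{-i}_{(l)}=P_{(l+1)}$ for $l\ge k$ is immediate.
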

They also constructed an example of the joint distribution of the $p$-values, under which the PRDS property fails to hold although the inequality \eqref{eq2} holds. Thus, it turns out that the inequality \eqref{eq2} is a  strictly weaker property of the test statistics than the PRDS property. They further showed the following optimality property of the BH procedure:

\begin{theorem}\label{BHoptimal}
Let $\mathcal{T}$ be the class of all step-up procedures with vector of cutoffs belonging to $\mathcal{S}_n$ and satisfying the inequality \eqref{eq2}. Then, the Benjamini–Hochberg
procedure is optimal in the class $\mathcal{T}$. That is, for any step-up procedure $T \in \mathcal{T}$ with vector of critical values $\mathbf{\alpha}_T \in \mathcal{S}_n$, if it can control the FDR at $\alpha$, then $\alpha_k \leq k\alpha/n$ for each $k \in \{1, \ldots, n\}$.
\end{theorem}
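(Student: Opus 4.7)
I would prove the theorem by contradiction, assuming that some $\alpha_m > m\alpha/n$ and constructing an adversarial configuration in $\mathcal{T}$ under which FDR control fails. The starting point is the familiar decomposition
$$FDR_T = \sum_{i \in \mathcal{A}} \sum_{k=1}^n \frac{1}{k}\, P(P_i \le \alpha_k,\, R_n(T) = k) = \sum_{i \in \mathcal{A}} \sum_{k=1}^n \frac{\alpha_k}{k}\, P(R_n(T) = k \mid P_i \le \alpha_k),$$
valid for step-up procedures whenever the null $p$-values are marginally uniform. Under inequality \eqref{eq2}, the inner sum over $k$ is bounded by $1$ for each $i \in \mathcal{A}$, which already suffices for the easy sufficient direction: if $\alpha_k/k \le \alpha/n$ for all $k$, then $FDR_T \le |\mathcal{A}|\cdot \alpha/n \le \alpha$.

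For the necessary direction, I would specialize to the configuration of independent uniform $p$-values with all $n$ hypotheses true null, for which \eqref{eq2} holds with equality. The classical Benjamini--Yekutieli leave-one-out identity then yields
$$P(P_i \le \alpha_k,\, R_n(T) = k) = \alpha_k \cdot P(R_n^{(i,0)}(T) = k),$$
where $R_n^{(i,0)}$ is the number of rejections by $T$ when $P_i$ is replaced by $0$. Symmetry across $i$ reduces the FDR to the compact form $FDR_T = n\cdot E[\alpha_{R^*}/R^*]$, where $R^* \ge 1$ is the common distribution of the leave-one-out rejection count. Specializing to BH makes $\alpha_k/k$ constant and gives $FDR_{BH} = \alpha$, exhibiting BH at the boundary of FDR control.

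The core of the argument is to convert the expectation-level bound $E[\alpha_{R^*}/R^*] \le \alpha/n$ into the claimed pointwise inequality $\alpha_k \le k\alpha/n$ for each $k$. For this, I would vary the underlying joint distribution of the $p$-values across a family of configurations each lying in $\mathcal{T}$, obtained by augmenting the iid uniform setup with trivially rejected false-null $p$-values fixed at $0$ and with super-uniform true-null marginals fixed near $1$. For a suitable choice, the leave-one-out random variable $R^*$ becomes degenerate at any preselected value $k$, making $E[\alpha_{R^*}/R^*]$ equal to $\alpha_k/k$ exactly, so that the FDR control forces $\alpha_k \le k\alpha/n$.

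The main obstacle is this last step. One has to verify that the augmented configurations indeed remain within $\mathcal{T}$ (preserving the equality in \eqref{eq2} on the genuinely uniform indices), and that the effective multiplier in the FDR identity is exactly $n$ for each configuration. These require careful accounting of how the deterministic $p$-values contribute trivially to the FDR sum (via zero terms from $P(P_i \le \alpha_k) = 0$ for super-uniform marginals at $1$) while still participating in the step-up trajectory that determines $R^*$. Once this calibration is settled, the theorem follows from combining the family of pointwise bounds indexed by $k \in \{1, \ldots, n\}$.
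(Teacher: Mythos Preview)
The paper does not prove this theorem; it is quoted as a result of Guo and Rao \cite{Guo}. Judged on its own merits, your outline is sound through the FDR decomposition and the reduction to $FDR_T=n\,\mathbb{E}[\alpha_{R^*}/R^*]$ under the i.i.d.\ uniform global null, but the proposed construction for the necessary direction does not yield the bound $\alpha_k\le k\alpha/n$.

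The obstacle you flag is fatal to the specific construction you propose. To force the leave-one-out variable $R^*$ to be degenerate at a prescribed $k$ by inserting $k-1$ zeros and $n-k$ ones, you must reduce the number of genuinely uniform true nulls to exactly one; the multiplier in the identity then becomes $n_0=1$, not $n$, and FDR control only gives $\alpha_k\le k\alpha$. The super-uniform nulls fixed at $1$ contribute zero to the FDR sum (as you note), so they cannot restore the factor $n$; and any additional i.i.d.\ uniform nulls destroy the degeneracy of $R^*$. No ``careful accounting'' reconciles these two requirements within an augmented-independence model. What actually works, and what Guo and Rao do, is to keep all $n$ hypotheses as true nulls with exact uniform marginals but make the $p$-values \emph{strongly dependent}: for a target $m$, take a uniform $U$, a random size-$m$ subset $I$, and set $P_i=(m/n)U$ for $i\in I$ and $P_i=m/n+(1-m/n)U$ otherwise. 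Then each marginal is uniform, one checks \eqref{eq2} holds with equality, $R_n\in\{0,m\}$, and $FDR_T=\min(1,n\alpha_m/m)$, which forces $\alpha_m\le m\alpha/n$. The essential idea your proposal misses is that the adversarial configuration must be a dependent distribution with $n$ uniform true-null marginals, not an augmentation of an independent one.
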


Theorem \ref{thmBH} and Theorem \ref{BHoptimal} result in the following:
\begin{theorem}\label{BHoptimal2}
Let $\mathcal{T}$ be the class of all step-up procedures with vector of cutoffs belonging to $\mathcal{S}_n$ and satisfying the inequality \eqref{eq2}. Let $T \in \mathcal{T}$ be such that it controls the FDR at $\alpha \in (0,1)$. 
 Consider the equicorrelated normal setup with correlation $\rho$. Then, under the global null, for all $\alpha \in (0,1)$ and for all $\rho \in (0,1)$, 
$$\lim_{n \to \infty} FDR_{T}(n, \alpha, \rho) \leq 1 - \Phi\left[\inf_{t \in  (0,1)} \frac{\Phi^{-1}\left(1-t \alpha\right)-\sqrt{1-\rho} \cdot \Phi^{-1}(1-t)}{\sqrt{\rho}}\right].$$
\end{theorem}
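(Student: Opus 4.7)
The plan is to reduce the statement directly to Theorem~\ref{thmBH} by showing that any $T \in \mathcal{T}$ satisfying the hypotheses is dominated by the Benjamini--Hochberg procedure in terms of rejections. Concretely, I would argue that $FDR_T \leq FDR_{BH}$ pointwise in $n$ under the global null, and then pass to the limit.

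First I would invoke Theorem~\ref{BHoptimal}: since $T$ lies in $\mathcal{T}$ and controls the FDR at level $\alpha$, its vector of critical values $\boldsymbol{\alpha}_T = (\alpha_1^T, \ldots, \alpha_n^T) \in \mathcal{S}_n$ satisfies $\alpha_k^T \leq k\alpha/n = \alpha_k^{BH}$ for every $k \in \{1, \ldots, n\}$. Thus the BH cutoffs dominate those of $T$ componentwise. Next, for step-up procedures, a pointwise smaller cutoff vector produces pointwise fewer rejections: writing $m_T = \max\{i : P_{(i)} \leq \alpha_i^T\}$ and $m_{BH} = \max\{i : P_{(i)} \leq \alpha_i^{BH}\}$ (with the maximum of the empty set being $0$), the inclusion $\{i : P_{(i)} \leq \alpha_i^T\} \subseteq \{i : P_{(i)} \leq \alpha_i^{BH}\}$ yields $m_T \leq m_{BH}$. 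In particular, $R_n(T) \leq R_n(BH)$ almost surely, so $\{R_n(T) \geq 1\} \subseteq \{R_n(BH) \geq 1\}$.

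Now under the global null hypothesis, every rejected hypothesis is a false rejection, so $V_n = R_n$ for both procedures whenever $R_n \geq 1$. Hence, as already observed in the proof of Theorem~\ref{thmBH}, $FDR = FWER = \mathbb{P}(R_n \geq 1)$ for both $T$ and BH. The set inclusion above therefore gives
\[
FDR_T(n, \alpha, \rho) = \mathbb{P}_{M_n(\rho)}\bigl(R_n(T) \geq 1\bigr) \leq \mathbb{P}_{M_n(\rho)}\bigl(R_n(BH) \geq 1\bigr) = FDR_{BH}(n, \alpha, \rho)
\]
for every $n$. Letting $n \to \infty$ and applying Theorem~\ref{thmBH} on the right-hand side delivers the claimed bound.

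Since all the analytic content is absorbed into Theorem~\ref{BHoptimal} (which bounds the cutoffs) and Theorem~\ref{thmBH} (which computes the limit for BH), there is essentially no remaining obstacle. The only step worth double-checking is the monotonicity claim---that componentwise domination of step-up cutoffs translates to stochastic domination of the rejection counts---but this follows immediately from the definition of the step-up index $m_2$. It may also be worth recording, as a small remark, that the argument produces the stronger pointwise inequality $FDR_T(n,\alpha,\rho) \leq FDR_{BH}(n,\alpha,\rho)$ for every $n$, not merely in the limit.
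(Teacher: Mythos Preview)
Your proposal is correct and follows exactly the route the paper intends: the paper simply states that Theorem~\ref{BHoptimal2} ``results from'' Theorems~\ref{thmBH} and~\ref{BHoptimal}, and your argument is precisely the natural way to combine them---use Theorem~\ref{BHoptimal} to dominate the cutoffs, deduce $R_n(T)\le R_n(BH)$ from the definition of a step-up rule, identify FDR with $\mathbb{P}(R_n\ge 1)$ under the global null, and pass to the limit via Theorem~\ref{thmBH}. Your observation that the inequality already holds for each finite $n$ is a nice sharpening worth keeping.
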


\section{Hommel's Procedure} We have focused on step-down and step-up procedures so far. However, many powerful MTPs proposed in the literature do not belong to the step-down or step-up categories. The Hommel \cite{Hommel} procedure is such a $p$-value based MTP that controls the FWER. The decisions for the individual hypotheses are performed in the following simple way: 

Step 1. Compute $j=\max \left\{i \in\{1, \ldots, n\}: P_{(n-i+k)}>k \alpha / i\right.$ for $\left.k=1, \ldots, i\right\}$. 

Step 2. If the maximum does not exist in Step 1, reject all the hypotheses. 

\hspace{13mm} Otherwise,  reject all $H_i$ with $P_i \leqslant \alpha / j$. 

\noindent Hommel's MTP is uniformly more powerful than the methods of Bonferroni, Holm, and Hochberg \cite{Gou}. The following two results depict the asymptotic behavior of the FWER of Hommel's procedure under the independent normal setup and under the positively equicorrelated normal setup, respectively.

\begin{theorem}\label{thm5.1}
Consider the multiple testing problem under the independent normal setup. Under the global null, we have
$$\lim_{n \to \infty} FWER_{Hommel}(n, \alpha, 0) = 1 - e^{-\alpha}.$$
\end{theorem}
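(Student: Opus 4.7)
The plan is to sandwich $FWER_{Hommel}(n,\alpha,0)$ between two quantities both converging to $1-e^{-\alpha}$. Under independence and the global null, the $p$-values $P_1, \ldots, P_n$ are i.i.d.\ uniform on $(0,1)$. For the lower bound, since Hommel's procedure is uniformly more powerful than Bonferroni's, every Bonferroni rejection is also a Hommel rejection; hence
$$FWER_{Hommel}(n,\alpha,0) \geq FWER_{Bon}(n,\alpha,0) = 1 - (1 - \alpha/n)^n \longrightarrow 1 - e^{-\alpha}.$$
For the matching upper bound I would decompose
$$FWER_{Hommel}(n,\alpha,0) = \mathbb{P}(P_{(1)} \leq \alpha/n) + \mathbb{P}(\text{Hommel rejects} \geq 1,\, P_{(1)} > \alpha/n).$$
The first term tends to $1-e^{-\alpha}$, so the task reduces to showing the second vanishes. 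By the definition of Hommel's procedure, it rejects at least one hypothesis iff either $P_{(n)} \leq \alpha$ (so the maximum $j$ in Step~1 does not exist, and all hypotheses are rejected) or such a $j_{\max}$ exists with $P_{(1)} \leq \alpha/j_{\max}$. The first sub-case contributes at most $\alpha^n \to 0$; in the second, restricted to $\{P_{(1)} > \alpha/n\}$, the inequality $P_{(1)} \leq \alpha/j_{\max}$ forces $j_{\max} < n$.

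The technical crux is to prove $j_{\max}/n \xrightarrow{p} 1$. Define $J_0 := \{i : P_{(n-i+k)} > k\alpha/i \text{ for all } k \in [i]\}$, so $j_{\max} = \max J_0$ whenever the latter is nonempty. A direct calculation using $k\alpha/i \geq (k-1)\alpha/(i-1)$ for $1 \leq k \leq i$ shows $J_0$ is downward-closed in $i$, so $\{j_{\max} \leq M\} = \{M+1 \notin J_0\}$. Setting $M = \lceil(1-\epsilon)n\rceil$ and substituting $l = n - M - 1 + k$, the event $\{M+1 \in J_0\}$ becomes the simultaneous inequalities
$$P_{(l)} > \alpha(1 - (n-l)/(M+1)), \qquad l \in \{n-M,\ldots,n\}.$$
A short algebraic check shows the deterministic margin $l/n - \alpha(1 - (n-l)/(M+1))$ is at least $\min(\epsilon, 1-\alpha) > 0$, uniformly in $l$, for all sufficiently large $n$. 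Since $P_{(l)}$ concentrates near $l/n$, Chernoff bounds applied to $\mathbb{P}(P_{(l)} \leq c_l) = \mathbb{P}(\mathrm{Bin}(n,c_l) \geq l)$ yield an exponentially small failure probability for each $l$, and a union bound over the $\Theta(n)$ constraints gives $\mathbb{P}(j_{\max} \leq (1-\epsilon)n) \to 0$.

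To conclude, combine $nP_{(1)} \xrightarrow{d} \mathrm{Exp}(1)$ with the convergence just shown:
$$\mathbb{P}(\alpha/n < P_{(1)} \leq \alpha/j_{\max}) \leq \mathbb{P}(\alpha/n < P_{(1)} \leq \alpha/((1-\epsilon)n)) + \mathbb{P}(j_{\max} \leq (1-\epsilon)n),$$
whose right-hand side tends to $e^{-\alpha} - e^{-\alpha/(1-\epsilon)}$ as $n \to \infty$, and this in turn vanishes as $\epsilon \downarrow 0$. Matched with the lower bound, this yields $\lim_{n\to\infty} FWER_{Hommel}(n,\alpha,0) = 1-e^{-\alpha}$. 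The main obstacle is the uniform control of $j_{\max}$: because $\{M+1 \in J_0\}$ encodes $\Theta(n)$ order-statistic inequalities, the tail estimates for each must be sharp and uniform in $l$ in order to survive the union bound.
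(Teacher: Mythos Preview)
Your proposal is correct. Both your argument and the paper's hinge on the same idea: showing that the Hommel index $j_{\max}$ satisfies $j_{\max}/n \to 1$ in probability, which forces the Hommel rejection rule to coincide asymptotically with Bonferroni's. The paper arrives at this via the heuristic that for each fixed $t = j/n \in (0,1)$ and $s = k/n \in (0,t]$, the order statistic $X_{(n(t-s)+1)}$ converges in probability to the $(t-s)$-quantile, and the limiting inequality $t(1-t) > s(\alpha - t)$ holds for all such $s,t$; it then concludes directly that $j_{\max} = n$ asymptotically and identifies $FWER_{Hommel}$ with $FWER_{Bon}$.

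Your treatment is more careful in two respects. First, you explicitly separate the lower bound (Hommel dominates Bonferroni) from the upper bound, which avoids having to claim equality of the two procedures for finite $n$. Second, your proof that $j_{\max}/n \to 1$ uses the downward-closedness of $J_0$ to reduce to a single event $\{M+1 \in J_0\}$, then controls the $\Theta(n)$ order-statistic constraints uniformly via binomial/Chernoff tail bounds; this handles the boundary $t \uparrow 1$ cleanly, whereas the paper's quantile-convergence argument is pointwise in $t$ and does not by itself justify the passage to $t = 1$. The price you pay is a slightly longer argument and the final $\epsilon \downarrow 0$ step; what you gain is a fully rigorous sandwich. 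Your claimed margin $\min(\epsilon, 1-\alpha)$ should really be, say, $\min(\epsilon/2, 1-\alpha)$ for large $n$ (since $(n-M)/n$ only approaches $\epsilon$ and you subtract $\alpha/(M+1)$), but this is cosmetic and does not affect the argument.
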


\begin{theorem}\label{thm5.2}
Consider the multiple testing problem under the equicorrelated normal framework with correlation $\rho \in (0,1)$. Then, for any $\alpha \in (0,1)$,
$$\lim_{n \to \infty} FWER_{Hommel}(n, \alpha, \rho) = 0 $$
with probability one under the global null hypothesis.
\end{theorem}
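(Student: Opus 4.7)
The plan is first to reduce the FWER to a single-event probability and then to use the equicorrelated decomposition to show it vanishes asymptotically. Because Hommel's rule applies the common data-dependent threshold $\alpha/j$ to every hypothesis, at least one rejection forces $P_{(1)} \leq \alpha/j$, or alternatively $j$ fails to exist (in which case all hypotheses are rejected). Consequently $FWER_{Hommel}(n,\alpha,\rho) \leq \mathbb{P}_{M_n(\rho)}(R_n(Hommel) \geq 1)$, and it suffices to show the right-hand side tends to zero.

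I would then invoke the standard representation $X_i = U + Z_i$ under equicorrelation, with $U \sim N(0,\rho)$ independent of the i.i.d.\ sequence $Z_i \sim N(0,1-\rho)$, and condition on $U = u$. Conditional on $U = u$, the $p$-values $P_i = 1 - \Phi(u + Z_i)$ are i.i.d.\ with common CDF $F_u(p) = \Phi((u - \Phi^{-1}(1-p))/\sqrt{1-\rho})$. Two asymptotic statements should then be established for $u$ in a compact set $[-M,M]$. First, extreme value theory for the i.i.d.\ $Z_i$'s yields $P_{(1)} = n^{-(1-\rho) + o(1)}$ almost surely, so that $nP_{(1)} \to \infty$ conditionally whenever $\rho > 0$. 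Second, Glivenko--Cantelli converts the defining inequalities $P_{(n-i+k)} > k\alpha/i$ of Hommel's index (after the substitutions $i = sn$, $k = \tau s n$) into the deterministic requirement $s < \bar F_u(\tau\alpha)/(1-\tau)$ for all $\tau \in (0,1)$, from which $j/n$ converges almost surely to $s^{\ast}(u) := \inf_{\tau \in (0,1)} \bar F_u(\tau\alpha)/(1-\tau)$, a strictly positive quantity for each finite $u$.

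Combining these, the Hommel threshold $\alpha/j$ is asymptotically of order $1/n$, whereas $P_{(1)}$ is of strictly larger order $n^{-(1-\rho)}$ when $\rho > 0$. Hence $P_{(1)} > \alpha/j$ almost surely for large $n$ conditional on $U \in [-M,M]$, so Hommel rejects nothing on this event. Since $\mathbb{P}(|U| > M) \to 0$ as $M \to \infty$, a final integration over $U$ combined with bounded convergence delivers $FWER_{Hommel}(n,\alpha,\rho) \to 0$.

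The principal difficulty lies in establishing the uniform lower bound $\inf_{u \in [-M,M]} s^{\ast}(u) > 0$, which requires verifying that $s^{\ast}$ is continuous in $u$ and strictly positive for each finite $u$ via direct estimates on the Gaussian ratio $\bar F_u(\tau\alpha)/(1-\tau)$, together with care at the boundary $\tau \to 1$ where the ratio blows up. A secondary subtlety is the case where Hommel's index fails to exist; this event is contained in $\{P_{(n)} \leq \alpha\} = \{\min_i X_i \geq \Phi^{-1}(1-\alpha)\}$, whose probability decays to zero by an argument parallel to the proof of Theorem \ref{thm3.2}.
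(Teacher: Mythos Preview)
Your proposal is correct and shares the paper's skeleton: write $X_i = U + Z_i$, restrict $U$ to a bounded set, show that Hommel's index satisfies $\liminf j/n > 0$ there, and conclude that $P_{(1)}$ eventually exceeds the threshold $\alpha/j \asymp 1/n$. The technical choices differ in two places. For the lower bound on $j/n$, the paper replaces the limiting constraint function $f(s)=\Phi^{-1}(1-s\alpha/t)-\sqrt{1-\rho}\,\Phi^{-1}(t-s)$ by the cruder minorant $g(s)=\Phi^{-1}(1-s\alpha/t)-\sqrt{1-\rho}\,\Phi^{-1}(t)$, from which the explicit bound $t_0 \geq \Phi\bigl((-U-\Phi^{-1}(\alpha))/\sqrt{1-\rho}\bigr)$ drops out by a one-line calculation; you instead invoke Glivenko--Cantelli to identify the exact limit $s^{\ast}(u)=\inf_{\tau}\bar F_u(\tau\alpha)/(1-\tau)$ and then argue its positivity and continuity. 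For the final comparison, the paper simply cites the earlier result $\mathbb{P}_{M_n(\rho)}[P_{(1)} \leq a\alpha/n] \to 0$ from \cite{deybhandari}, whereas you derive the sharper rate $P_{(1)} = n^{-(1-\rho)+o(1)}$ via extreme-value asymptotics for the i.i.d.\ $Z_i$. Your route is more informative about the actual orders involved and makes transparent \emph{why} $\rho>0$ is needed; the paper's route is more elementary and avoids the uniformity-in-$\tau$ issues that Glivenko--Cantelli leaves at the boundary.
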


\begin{proof}[\textbf{\upshape Proof of Theorem \ref{thm5.1}.}]
For $1 \leq i \leq n$, we have $P_{(i)}=1-\Phi\left(X_{(n-i+1)}\right)$. Putting $i=n-j+k$ (here $1 \leq j \leq n$ and $1 \leq k \leq j$) gives
$P_{(n-j+k)}=1-\Phi\left(X_{(j-k+1)}\right) \quad k \leqslant j$. 

\newpage
\noindent Now, 
$$
\begin{aligned}
P_{(n-j+k)}>\frac{k \alpha}{j} 
\iff & 1-\Phi\left(X_{(j-k+1)}\right)>\frac{k \alpha}{j} \\
\iff & \Phi^{-1}\left(1-\frac{k \alpha}{j}\right)>X_{(j-k+1)} \\
\iff & \Phi^{-1}\left(1-\frac{s \alpha}{t}\right)>X_{(n(t-s)+1)} \quad \text{where} \hspace{1mm} s=k/n \hspace{1mm}\text{and} \hspace{1mm} t=j/n .
\end{aligned}
$$
\noindent For any $r \in(0,1)$, $X_{(nr)}$ converges in probability to $r$'th quantile of the distribution of $X_1$ as $n \rightarrow \infty$. This implies, $X_{(n(t-s)+1)}$ converges in probability to $\Phi^{-1}(t-s)$ as $n \rightarrow \infty$. Thus, as $n \rightarrow \infty$, 
$$
\begin{aligned}
P_{(n-j+k)}>\frac{k \alpha}{j}
\iff & \Phi^{-1}\left(1-\frac{s \alpha}{t}\right)>\Phi^{-1}(t-s) \\
\iff & 1-\frac{s \alpha}{t}>t-s \\
\iff & t-s \alpha>t(t-s) \\
\iff & t(1-t)>s(\alpha-t).
\end{aligned}
$$

We have $t \geq s$ and $1>\alpha$. So, $t(1-t)>s(\alpha-t)$ always holds. This means that the largest $t$ for which $t(1-t)>s(\alpha-t)$ holds for each $s \in (0,t]$ is 1. This in turn implies that, as $n \to \infty$, the largest integer $j \leq n$ satisfying $P_{(n-j+k)}>\frac{k \alpha}{j}$ for all $k \in \{1, \ldots, j\}$ is $n$ with probability one. Thus, the Hommel's procedure is same as the Bonferroni's procedure as $n \rightarrow \infty$. Hence,
$$\lim_{n \to \infty} FWER_{Hommel} (n, \alpha, 0)=1-e^{-\alpha}.$$
\end{proof}

\begin{proof}[\textbf{\upshape Proof of Theorem \ref{thm5.2}.}]
For the equicorrelated normal framework with correlation $\rho \in (0,1)$, for each $i \geq 1$, we have $X_{i}=U+Z_{i}$. Here $U \sim N(0,\rho)$ is independent of $\left\{Z_{n}\right\}_{n \geq 1}$ and $Z_{i}$’s are i.i.d $N(0,1-\rho)$. 

\vspace{3mm}
We establish Theorem \ref{thm5.2} in the following steps:

\begin{enumerate}
    \item Showing that as $n \rightarrow \infty$,
$$P_{(n-j+k)}>\frac{k \alpha}{j} \hspace{2mm} \text { for all } k=1, \ldots, j 
\iff U < \min _{0<s<t} f(s)
$$where $f(s)=\Phi^{-1}\left(1-s\alpha/t\right) - \sqrt{1-\rho}\cdot \Phi^{-1}(t-s)$.
\item Showing that $$\Phi\left(\frac{-U-\Phi^{-1}(\alpha)}{\sqrt{1-p}}\right)>t \hspace{2mm} \text{implies} \hspace{2mm}U<\min_{0<s<t} f(s).$$
\item Showing that, for each positive integer $m$,
$$ FWER_{Hommel}(n, \alpha, \rho) \leq \mathbb{P}\left[P_{(1)} \leqslant \frac{1}{t_0} \cdot \frac{\alpha}{n}\right] + \mathbb{P}(U \geq m)$$
where $t_{0} = \max _t\left\{t \in(0,1): \min _{0<s<t} f(s)>U\right\}$.
\end{enumerate}

\vspace{3mm}
\noindent We explicate the steps now.\\


\noindent Similar to the previous proof, we have
$$
\begin{aligned}
P_{(n-j+k)}>\frac{k \alpha}{j} 
\iff & 1-\Phi\left(X_{(j-k+1)}\right)>\frac{k \alpha}{j} \\
\iff & \Phi^{-1}\left(1-\frac{k \alpha}{j}\right)>X_{(j-k+1)} \\
\iff & \Phi^{-1}\left(1-\frac{k \alpha}{j}\right)>U+Z_{(j-k+1)} \\
\iff & \Phi^{-1}\left(1-\frac{s \alpha}{t}\right)>U+Z_{(n(t-s)+1)} \quad \text{where} \hspace{1mm} s=k/n \hspace{1mm}\text{and} \hspace{1mm} t=j/n .
\end{aligned}
$$

\noindent For any $r \in(0,1)$, $Z_{(nr)}$ converges in probability to $r$'th quantile of the distribution of $Z_1$ as $n \rightarrow \infty$. This implies, $Z_{(n(t-s)+1)}$ converges in probability to $\sqrt{1-\rho}\cdot \Phi^{-1}(t-s)$ as $n \rightarrow \infty$. Thus, as $n \rightarrow \infty$, 

$$
\begin{aligned}
P_{(n-j+k)}>\frac{k \alpha}{j} 
\iff & U < \Phi^{-1}\left(1-\frac{s \alpha}{t}\right) - \sqrt{1-\rho}\cdot \Phi^{-1}(t-s).
\end{aligned}
$$

\noindent This means, as $n \rightarrow \infty$,
\begin{equation}\label{step1}P_{(n-j+k)}>\frac{k \alpha}{j} \text { for all } k=1, \ldots, j 
\iff U < \min _{0<s<t} f(s)\end{equation}
completing the proof of step 1.

\noindent Now, $t>t-s$ as $s >0$. This implies $\Phi^{-1}(t)>\Phi^{-1}(t-s)$. Consequently, for each $s >0$, $f(s) > g(s)$ where
$g(s)= \Phi^{-1}\left(1-\frac{s\alpha}{t}\right)-\Phi^{-1}(t)$. Thus, 
$$g(s) > U \implies f(s) > U.$$
\noindent Now, 

$$
\begin{aligned}
g(s)>U
\iff & \Phi^{-1}\left(1-\frac{s \alpha}{t}\right)-\sqrt{1-\rho}\cdot \Phi^{-1}(t)>U \\
\iff & \Phi^{-1}\left(1-\frac{s \alpha}{t}\right)>U+\sqrt{1-\rho}\cdot \Phi^{-1}(t) \\
\iff & 1-\frac{s \alpha}{t}>\Phi\left(U+\sqrt{1-\rho}\cdot \Phi^{-1}(t)\right) \\
\iff & \frac{\Phi\left(-U-\sqrt{1-\rho}\cdot \Phi^{-1}(t)\right)}{\alpha}>\frac{s}{t} .
\end{aligned}
$$
\noindent Therefore, if $\frac{\Phi\left(-U-\sqrt{1-\rho}\cdot \Phi^{-1}(t)\right)}{\alpha}>1$ then $\forall s \in(0, t)$, $g(s)>U$. Hence, $\frac{\Phi\left(-U-\sqrt{1-\rho}\cdot \Phi^{-1}(t)\right)}{\alpha}>1$ implies $f(s)>U$ for all $s \in(0, t)$. Now, 
$$
\begin{aligned}
\frac{\Phi\left(-U-\sqrt{1-\rho} \cdot \Phi^{-1}(t)\right)}{\alpha}>1 
\iff & -U-\sqrt{1-\rho} \cdot \Phi^{-1}(t)>\Phi^{-1}(\alpha) \\
\iff & \Phi\left(\frac{-U-\Phi^{-1}(\alpha)}{\sqrt{1-\rho}}\right)>t.
\end{aligned}
$$
\newpage
\noindent Therefore, we have established the following:

\begin{equation}\label{step2}\Phi\left(\frac{-U-\Phi^{-1}(\alpha)}{\sqrt{1-p}}\right)>t \hspace{2mm} \text{implies} \hspace{2mm}U<\min_{0<s<t} f(s),\end{equation}
completing step 2.

\noindent Thus, 
\begin{align*}
t_0:= \max _t\left\{t \in(0,1): \min _{0<s<t} f(s)\geqslant U\right\}
\geqslant & \max _t\left\{t \in(0,1): \Phi\left(\frac{-U-\Phi^{-1}(\alpha)}{\sqrt{1-\rho}}\right)>t\right\}.
\end{align*}
Now, $U<r$ implies $t_0 \geqslant \varepsilon_r$ where
$$
\varepsilon_r=\Phi\left(\frac{-r-\Phi^{-1}(\alpha)}{\sqrt{1-p}}\right).
$$

\noindent So, for every $m \in \mathbb{N}$, there exists
$\varepsilon_m>0$ such that $t_0>\varepsilon_m$ if $U<m$. In other words, there is $\varepsilon_m$ such that $t_0>\varepsilon_m>0$ with probability at least $\mathbb{P}(U<m)$.
This implies, $t_0$ is bounded away from zero with probability one. Now, let
$$
j_0=\max _{1 \leqslant j \leqslant n}\left\{P_{(n-j+k)}>\frac{k \alpha}{j} \text { for all } k=1, \ldots, j\right\}.
$$
Evidently, $j_0 \geqslant n t_0$. Consequently, under the global null,
$$
\begin{aligned}
FWER_{Hommel}(n, \alpha, \rho)
=& \mathbb{P}\left[\bigcup_{i=1}^n\left\{P_i \leqslant \frac{\alpha}{j_0}\right\}\right]\\
\leqslant &\mathbb{P}\left[\bigcup_{i=1}^n\left\{P_i \leqslant \frac{\alpha}{n t_0}\right\}\right] +\mathbb{P}(U \geqslant m) \\
= &\mathbb{P}\left[P_{(1)} \leqslant \frac{1}{t_0} \cdot \frac{\alpha}{n}\right] + \mathbb{P}(U \geqslant m).
\end{aligned}
$$
This completes the proof of Step 3. Now, $\mathbb{P}(U \geq m) \leq \epsilon$ for all $\epsilon >0$ as $m \to \infty$. We claim now that
$$\mathbb{P}\left[P_{(1)} \leqslant \frac{1}{t_0} \cdot \frac{\alpha}{n}\right] \longrightarrow 0 \quad \text{as} \hspace{2mm} n \to \infty.$$
Its proof is precisely the same as the proof of Theorem 2 of \cite{deybhandari} and we therefore omit it. The rest is obvious.
\end{proof}
\begin{remark}
    Suppose $a >0$. The proof of Theorem 2 of \cite{deybhandari} also culminates in the following:
    $$\mathbb{P}_{M_{n}(\rho)}\left[P_{(1)} \leqslant a \cdot \frac{\alpha}{n}\right] \longrightarrow 0 \quad \text{as} \hspace{2mm} n \to \infty$$
    for each $\rho \in (0,1)$. Then, invoking Slepian's inequality, we have the following:
    Let $\Sigma_n$ be the correlation matrix of $X_1, \ldots, X_n$ having $(i,j)$’th entry $\rho_{ij}$ with $\liminf \rho_{ij}=\delta>0$. Suppose $\mu^{\star} = \sup \mu_{i} < \infty$. Then, for any $\alpha \in (0,1)$,
    $$\mathbb{P}_{\Sigma_{n}}\left[P_{(1)} \leqslant a \cdot \frac{\alpha}{n}\right] \longrightarrow 0 \quad \text{as} \hspace{2mm} n \to \infty.$$
    Note that this is a much stronger result than Corollary \ref{cor1}.
\end{remark}
If one replaces $X_{i}$ by $X_{i}+\mu_{i}$ and $U$ by $U+\mu_{i}$ in the proof of Theorem \ref{thm5.2}, one would obtain the following result:

\begin{theorem}\label{thm5.4}
Consider the equicorrelated normal setup with equicorrelation $\rho\in (0,1)$. Suppose $\sup \mu_{i}$ is finite. Then, for any $\alpha \in (0,1)$, 
$$\lim_{n \to \infty} \mathbb{P}_{M_{n}(\rho)}\bigg(R_{n}(Hommel) \geq 1\bigg) = 0 $$
with probability one under any configuration of true and false null hypotheses.
\end{theorem}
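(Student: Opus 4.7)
The strategy is to mirror the three-step proof of Theorem \ref{thm5.2} and to handle the shifts $\mu_i$ uniformly via the bound $\mu^{\star}=\sup\mu_i<\infty$, exactly as the author hints. Concretely, I would first reduce to a ``shifted global-null'' problem by a pointwise stochastic domination, and then run the argument of Theorem \ref{thm5.2} verbatim after relabeling $U\mapsto U+\mu^{\star}$.

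The reduction is as follows. Write $X_i = U+Z_i+\mu_i$ with $U\sim N(0,\rho)$ independent of i.i.d.\ $Z_i\sim N(0,1-\rho)$, and observe the componentwise inequality $X_i\leq Y_i:=(U+\mu^{\star})+Z_i$. By monotonicity of order statistics under coordinatewise domination, $X_{(r)}\leq Y_{(r)}$ for every $r$, whence the auxiliary p-values $\tilde P_i:=1-\Phi(Y_i)$ satisfy $\tilde P_{(i)}\leq P_{(i)}$. Hommel's rule may be encoded by
$$j_0:=\max\bigl\{j\in\{1,\dots,n\}: P_{(n-j+k)}>k\alpha/j\ \text{for all}\ k=1,\dots,j\bigr\},$$
with the convention $\alpha/0:=+\infty$ when the set is empty, so that $\{R_n(\text{Hommel})\geq 1\}=\{P_{(1)}\leq \alpha/j_0\}$. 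Defining $\tilde j_0$ analogously from $\tilde P_{(i)}$, the inequality $\tilde P_{(i)}\leq P_{(i)}$ makes each of the strict inequalities defining $\tilde j_0$ \emph{harder} to satisfy, so $\tilde j_0\leq j_0$ and therefore $\{P_{(1)}\leq \alpha/j_0\}\subseteq\{P_{(1)}\leq \alpha/\tilde j_0\}$.

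Next, I would apply Steps 1--3 of the proof of Theorem \ref{thm5.2} to the auxiliary system $Y_i=(U+\mu^{\star})+Z_i$; the function $f(s)=\Phi^{-1}(1-s\alpha/t)-\sqrt{1-\rho}\,\Phi^{-1}(t-s)$ is unchanged, but every occurrence of $U$ becomes $U+\mu^{\star}$. Step 2 then reads: $\Phi\bigl((-U-\mu^{\star}-\Phi^{-1}(\alpha))/\sqrt{1-\rho}\bigr)>t$ implies $\tilde t_0\geq t$. Consequently, for each $m>0$, on the event $\{U<m\}$ one has $\tilde t_0\geq \epsilon_m:=\Phi\bigl((-m-\mu^{\star}-\Phi^{-1}(\alpha))/\sqrt{1-\rho}\bigr)>0$, hence $\tilde j_0\geq n\tilde t_0\geq n\epsilon_m$. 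Combining with the previous step,
$$\mathbb{P}_{M_n(\rho)}\bigl(R_n(\text{Hommel})\geq 1\bigr)\ \leq\ \mathbb{P}\bigl(P_{(1)}\leq (1/\epsilon_m)\cdot \alpha/n\bigr)\ +\ \mathbb{P}(U\geq m).$$
The first term tends to $0$ as $n\to\infty$ by the remark immediately preceding the theorem (applied with $\Sigma_n=M_n(\rho)$, $\delta=\rho>0$, and $a=1/\epsilon_m$), and the second is made arbitrarily small by taking $m\to\infty$.

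The main obstacle is the domination step: one must verify that the coordinatewise inequality $X_i\leq Y_i$ actually forces $\tilde j_0\leq j_0$, given the intricacy of Hommel's rule. Once this reduction is cleanly established, everything else is essentially bookkeeping, since the three-step analysis of Theorem \ref{thm5.2} carries over unchanged after the relabeling $U\mapsto U+\mu^{\star}$, and the required convergence $\mathbb{P}[P_{(1)}\leq a\alpha/n]\to 0$ for fixed $a>0$ is precisely the content of the preceding remark.
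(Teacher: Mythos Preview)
Your proposal is correct and follows essentially the same approach as the paper, which merely hints to replace $U$ by $U+\mu_i$ in the proof of Theorem~\ref{thm5.2}; your uniform domination via $\mu^\star$ is a clean formalization of this hint (and mirrors how Theorem~\ref{thm4.2} handles the analogous issue). The monotonicity step you flag as the main obstacle is in fact immediate: since $\tilde P_{(i)}\leq P_{(i)}$ for all $i$, any $j$ lying in the defining set for $\tilde j_0$ also lies in the defining set for $j_0$, whence $\tilde j_0\leq j_0$.
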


We have following as a corollary which extends Theorem \ref{thm5.2} to any configuration of true and false null hypotheses:

\begin{corollary}\label{cor4}
Consider the equicorrelated normal setup with equicorrelation $\rho\in (0,1)$. Suppose $\sup \mu_{i}$ is finite. Then, for any $\alpha \in (0,1)$, 
$$\lim_{n \to \infty} FWER_{Hommel}(n, \alpha, \rho) = 0$$
with probability one under any configuration of true and false null hypotheses.
\end{corollary}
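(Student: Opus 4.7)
The plan is that Corollary \ref{cor4} should follow essentially immediately from Theorem \ref{thm5.4}, using only the trivial bound that the number of false rejections cannot exceed the total number of rejections. Since Theorem \ref{thm5.4} already handles the general configuration of true and false nulls (at the level of the event $\{R_n \geq 1\}$), the corollary is obtained by a one-line probabilistic domination argument.

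Concretely, I would write, for any configuration of true and false nulls and for any $\rho \in (0,1)$ with $\sup \mu_i < \infty$,
\begin{align*}
FWER_{Hommel}(n, \alpha, \rho)
&= \mathbb{P}_{M_n(\rho)}\bigl(V_n(\text{Hommel}) \geq 1\bigr) \\
&\leq \mathbb{P}_{M_n(\rho)}\bigl(R_n(\text{Hommel}) \geq 1\bigr),
\end{align*}
where the inequality uses $V_n(\text{Hommel}) \leq R_n(\text{Hommel})$ pointwise (a false rejection is, in particular, a rejection). Theorem \ref{thm5.4} states that the right-hand side tends to zero as $n \to \infty$, so the limiting FWER is zero as claimed. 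The almost-sure qualification transfers verbatim from Theorem \ref{thm5.4} since we are dominating a probability by a probability that converges to zero with probability one in the sense established there.

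There is essentially no obstacle here: all the substantive analytic work (the three-step decomposition involving $t_0$, the bound $t_0 \geq \varepsilon_m$ with probability at least $\mathbb{P}(U < m)$, and the reduction to $\mathbb{P}[P_{(1)} \leq (1/t_0) \cdot \alpha/n] \to 0$) has already been carried out in the proofs of Theorem \ref{thm5.2} and Theorem \ref{thm5.4}. The only conceptual point worth flagging is that Theorem \ref{thm5.4} controls the probability of rejecting \emph{any} hypothesis, which is a strictly stronger event than committing a type I error, so the FWER conclusion is subsumed. No further use of Slepian's inequality, exchangeability, or the Hommel step structure is needed at this stage.
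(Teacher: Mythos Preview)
Your proposal is correct and matches the paper's approach exactly: the paper states Corollary~\ref{cor4} immediately after Theorem~\ref{thm5.4} without a separate proof, treating it as the obvious consequence of Theorem~\ref{thm5.4} via the pointwise inequality $V_n(\text{Hommel}) \leq R_n(\text{Hommel})$. Your one-line domination argument is precisely the intended justification.
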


\section{Power Analysis}
We discuss now the asymptotic powers of stepwise procedures. As with the notions of type I and type II error rates, the concept of power can be extended in various ways when moving from single to multiple hypothesis testing \cite{r4}. One such notion of power is \textit{AnyPwr} \cite{r4}, which is the probability of rejecting at least one false null hypothesis. So, for a MTP $T$,
$$AnyPwr_{T} = \mathbb{P}(S_n(T) \geq 1),$$
where $S_{n}(T)$ denotes the number of true positives in MTP $T$. Dey and Bhandari \cite{deybhandari} showed the following regarding the asymptotic power of Bonferroni's method:

\begin{theorem}\label{thm6.1}
Consider the equicorrelated normal setup with equicorrelation $\rho \in (0,1)$. Suppose $\sup \mu_{i}$ is finite. Then, for any $\alpha \in (0,1)$, $AnyPwr_{Bonferroni}$ goes to zero as $n \to \infty$.
\end{theorem}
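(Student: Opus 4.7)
The plan is to mirror the bounding technique used in the proof of Theorem \ref{thm3.2}, applied now to the complementary event of rejecting at least one false null rather than at least one true null. First I would let $\mathcal{A}^c$ denote the set of indices $i$ with $\mu_i > 0$, and write
\begin{equation*}
AnyPwr_{Bonferroni}(n,\alpha,\rho) = \mathbb{P}_{M_n(\rho)}\bigg(\bigcup_{i \in \mathcal{A}^c}\{X_i > c_{\alpha,n}\}\bigg) = 1 - \mathbb{P}_{M_n(\rho)}\bigg(\bigcap_{i \in \mathcal{A}^c}\{X_i \leq c_{\alpha,n}\}\bigg).
\end{equation*}

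Next I would invoke the exchangeable representation $X_i = \theta + Z_i + \mu_i$ (with $\theta \sim \mathcal{N}(0,\rho)$ independent of the i.i.d.\ $Z_i \sim \mathcal{N}(0,1-\rho)$) so that, conditionally on $\theta$, the events $\{X_i \leq c_{\alpha,n}\}$ are independent. This yields
\begin{equation*}
AnyPwr_{Bonferroni}(n,\alpha,\rho) = 1 - \mathbb{E}_{\theta}\bigg[\prod_{i \in \mathcal{A}^c} \Phi\!\left(\frac{c_{\alpha,n}-\theta-\mu_i}{\sqrt{1-\rho}}\right)\bigg].
\end{equation*}
Since $\mu_i \leq \mu^\star$ for every $i$, each factor is bounded below by $\Phi\!\left(\frac{c_{\alpha,n}-\theta-\mu^\star}{\sqrt{1-\rho}}\right)$, and since this common lower bound lies in $[0,1]$ while $|\mathcal{A}^c| \leq n$, enlarging the exponent only decreases the product further, giving
\begin{equation*}
AnyPwr_{Bonferroni}(n,\alpha,\rho) \;\leq\; 1 - \mathbb{E}_{\theta}\!\left[\Phi^{n}\!\left(\frac{c_{\alpha,n}-\theta-\mu^\star}{\sqrt{1-\rho}}\right)\right].
\end{equation*}

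The right-hand side is exactly the quantity that appeared at the end of the proof of Theorem \ref{thm3.2}, which was shown there to vanish as $n \to \infty$. The essential reason is that $c_{\alpha,n} \sim \sqrt{2\log n}$, so for almost every fixed $\theta$ one has $n\bigl[1-\Phi\bigl((c_{\alpha,n}-\theta-\mu^\star)/\sqrt{1-\rho}\bigr)\bigr] \to 0$ (this uses $\rho > 0$, which makes the inflation factor $1/\sqrt{1-\rho} > 1$ decisive), so $\Phi^n(\cdots) \to 1$ pointwise and dominated convergence delivers the expectation. Hence $AnyPwr_{Bonferroni}(n,\alpha,\rho) \to 0$.

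No genuinely new obstacle arises here; the one subtle point is the replacement of $|\mathcal{A}^c|$ by $n$ in the exponent (going in the ``lower bound'' direction), which is valid precisely because the base lies in $[0,1]$. Once that observation is in place, the theorem reduces to an already-established limit, so the argument is short and self-contained given Theorem \ref{thm3.2}.
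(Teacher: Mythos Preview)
Your proposal is correct. The paper itself does not give a proof of Theorem~\ref{thm6.1} (it is quoted from \cite{deybhandari}), but your argument is exactly the bounding technique of the paper's proof of Theorem~\ref{thm3.2} applied to the index set $\mathcal{A}^c$ of false nulls, and it reduces to the same limiting expectation already handled there.
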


We mention below some results on stepwise MTPs:

\begin{corollary}\label{cor5}
Let $\Sigma_n$ be the correlation matrix of $X_1, \ldots, X_n$ with $(i,j)$’th entry $\rho_{ij}$ such that $\liminf \rho_{ij}=\delta>0$. Suppose $\sup \mu_{i}$ is finite and $T$ is any step-down MTP  controlling FWER at level $\alpha \in (0,1)$. Then, for any $\alpha \in (0,1)$, $AnyPwr_{T}$ goes to zero as $n \to \infty$.
\end{corollary}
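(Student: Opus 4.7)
The plan is to derive Corollary \ref{cor5} essentially for free from Corollary \ref{cor2}, by a deterministic bound on $AnyPwr_T$. The starting point is the trivial set-theoretic observation that every true positive is, in particular, a rejection. Thus $S_n(T) \leq R_n(T)$ pointwise, and the event $\{S_n(T) \geq 1\}$ is contained in the event $\{R_n(T) \geq 1\}$. This upper bound holds for every multiple testing procedure $T$ without any distributional assumption on the $X_i$'s.

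Concretely, I would write
$$AnyPwr_{T} \;=\; \mathbb{P}_{\Sigma_n}\bigl(S_n(T) \geq 1\bigr) \;\leq\; \mathbb{P}_{\Sigma_n}\bigl(R_n(T) \geq 1\bigr),$$
and then invoke Corollary \ref{cor2}. The hypotheses of Corollary \ref{cor2} — namely, $T$ is a step-down MTP controlling FWER at level $\alpha \in (0,1)$, $\mu^\star = \sup \mu_i < \infty$, and $\liminf \rho_{ij} = \delta > 0$ — match those of Corollary \ref{cor5} verbatim, so the right-hand side above vanishes as $n \to \infty$. Since $AnyPwr_T \geq 0$, a sandwich argument gives $\lim_{n \to \infty} AnyPwr_T = 0$.

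There is no real obstacle here, since Corollary \ref{cor5} is little more than a cosmetic reformulation of Corollary \ref{cor2}: all the quantitative work has already been absorbed into the earlier Slepian-based reduction to the equicorrelated setup and the elementary $\theta + Z_i$ decomposition that drives the proof of Theorem \ref{thm3.2}. The only subtle point worth flagging explicitly is that the inequality $S_n(T) \leq R_n(T)$ is genuinely deterministic, so the bound survives passage to any configuration of true and false null hypotheses without requiring a separate analysis of the power under non-null alternatives. This is also consistent in spirit with Theorem \ref{thm6.1}: under positive dependence with a uniform correlation lower bound, step-down FWER-controlling procedures become so conservative as $n \to \infty$ that they asymptotically reject nothing, hence detect no true alternative either.
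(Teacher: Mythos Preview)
Your proof is correct and matches the paper's own argument exactly: the paper derives Corollary \ref{cor5} from Corollary \ref{cor2} using the deterministic inequality $S_n(T) \leq R_n(T)$, just as you do.
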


 \begin{corollary}\label{cor6}
Consider the equicorrelated normal setup with equicorrelation $\rho\in (0,1)$. Suppose $\sup \mu_{i}$ is finite. Then, for any $\alpha \in (0,1/2)$, $AnyPwr_{Hochberg}$ goes to zero as $n \to \infty$.
\end{corollary}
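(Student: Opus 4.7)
The plan is to bound $\mathrm{AnyPwr}_{Hochberg}$ above by the probability of rejecting at least one hypothesis (true null or otherwise) and then invoke the already-established Corollary \ref{cor3}.

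First I would observe the tautology $S_n(T) \leq R_n(T)$ for any MTP $T$, since the number of true positives (correct rejections of false nulls) is at most the total number of rejections. This yields the set inclusion $\{S_n(Hochberg) \geq 1\} \subseteq \{R_n(Hochberg) \geq 1\}$, and by monotonicity of probability,
\begin{equation*}
\mathrm{AnyPwr}_{Hochberg} \;=\; \mathbb{P}_{M_n(\rho)}\bigl(S_n(Hochberg) \geq 1\bigr) \;\leq\; \mathbb{P}_{M_n(\rho)}\bigl(R_n(Hochberg) \geq 1\bigr).
\end{equation*}

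Next, I would apply Corollary \ref{cor3} directly. Since the hypotheses of Corollary \ref{cor6} match those of Corollary \ref{cor3} (equicorrelated normal setup with $\rho \in (0,1)$, $\sup \mu_i < \infty$, and $\alpha \in (0,1/2)$), Corollary \ref{cor3} guarantees that the right-hand side above tends to zero as $n \to \infty$ under any configuration of true and false null hypotheses. Hence $\mathrm{AnyPwr}_{Hochberg} \to 0$.

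There is no genuine obstacle here: the argument is a one-line consequence of $S_n \leq R_n$ combined with Corollary \ref{cor3}, which itself was derived from Theorem \ref{thm4.2} by replacing $V_n(Hochberg)$ with $R_n(Hochberg)$ in the Slepian-based chain of inequalities. The only conceptual point worth flagging is that the result requires the restriction $\alpha \in (0,1/2)$ inherited from Theorem \ref{thm4.1}/Theorem \ref{thm4.2}, where it is used to ensure that $\Phi^{-1}\bigl(\alpha/(n-i+1)\bigr)$ is negative so that the reciprocal manipulation in the Hochberg proof preserves inequality direction.
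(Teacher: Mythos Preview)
Your proposal is correct and matches the paper's own argument exactly: the paper derives Corollary~\ref{cor6} in one line from Corollary~\ref{cor3} via the inequality $S_n(T) \leq R_n(T)$. Your additional commentary on the origin of the $\alpha \in (0,1/2)$ restriction is accurate and consistent with the paper's reasoning.
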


\begin{corollary}\label{cor7}
Consider the equicorrelated normal setup with equicorrelation $\rho\in (0,1)$. Suppose $\sup \mu_{i}$ is finite. Then, for any $\alpha \in (0,1)$, $AnyPwr_{Hommel}$ goes to zero with probability one as $n \to \infty$ .
\end{corollary}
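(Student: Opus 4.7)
The plan is to reduce the power statement to a rejection statement that has already been proved. By the definition of $AnyPwr$, we have $AnyPwr_{\text{Hommel}} = \mathbb{P}\bigl(S_n(\text{Hommel}) \geq 1\bigr)$, where $S_n(\text{Hommel})$ counts the rejections that correspond to false null hypotheses. Since any true positive is in particular a rejection, we have the deterministic bound $S_n(\text{Hommel}) \leq R_n(\text{Hommel})$, and hence
\begin{equation*}
AnyPwr_{\text{Hommel}} \;=\; \mathbb{P}_{M_n(\rho)}\bigl(S_n(\text{Hommel}) \geq 1\bigr) \;\leq\; \mathbb{P}_{M_n(\rho)}\bigl(R_n(\text{Hommel}) \geq 1\bigr).
\end{equation*}

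Next, I would invoke Theorem~\ref{thm5.4}, which establishes under exactly the same hypotheses (equicorrelated normal setup with $\rho \in (0,1)$ and $\sup \mu_i < \infty$, for any $\alpha \in (0,1)$, and under any configuration of true and false null hypotheses) that $\lim_{n\to\infty} \mathbb{P}_{M_n(\rho)}(R_n(\text{Hommel}) \geq 1) = 0$. Combining this with the display above immediately gives $\lim_{n\to\infty} AnyPwr_{\text{Hommel}} = 0$, which is the desired conclusion.

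There is essentially no obstacle here; the entire content of the statement was absorbed into the preceding Theorem~\ref{thm5.4}, whose proof is where the real work was done (the bounding of the largest admissible index $j_0$ below by $n t_0$ with $t_0$ bounded away from zero, and the limiting argument on $\mathbb{P}[P_{(1)} \leq \alpha/(n t_0)]$). The only non-routine observation needed in this corollary is recognizing that $AnyPwr$ is trivially dominated by the probability of any rejection, so that the universal (over any configuration of nulls) bound provided by Theorem~\ref{thm5.4} applies. If one wished to be pedantic, one could also note that the bound $S_n \leq R_n$ holds pointwise, so convergence is not merely in expectation but as a bound on the probability itself, matching the ``with probability one'' phrasing inherited from the parent theorem.
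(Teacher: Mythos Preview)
Your proposal is correct and matches the paper's own argument exactly: the paper proves Corollary~\ref{cor7} in one line by noting that $S_n(T) \leq R_n(T)$ for any MTP $T$ and invoking Theorem~\ref{thm5.4}.
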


Corollary \ref{cor5}, \ref{cor6}, \ref{cor7} follow from Corollary \ref{cor2}, \ref{cor3} and Theorem \ref{thm5.4}, respectively, since $S_{n}(T) \leq R_n(T)$ for any MTP $T$.

\section{Concluding Remarks}
In recent years, substantial efforts have been made to understand the properties of multiple testing procedures under dependence. The article \cite{deybhandari} sheds light on the extent of the conservativeness of the Bonferroni method under dependent setups. However, there is little literature on the effect of correlation on general step-down or step-up procedures. This paper addresses this gap in a unified manner by investigating the limiting behaviors of several testing rules under the correlated Gaussian sequence model. We have proved asymptotic zero results for some popular MTPs controlling FWER at a pre-specified level. Specifically, we have shown that the limiting FWER approaches zero for any step-down rule provided the infimum of the correlations is strictly positive. 

The authors in \cite{HuangHsu} show that both Holm's and Hochberg's methods are special cases of partition testing. Holm's MTP tests each partition hypothesis using the maximum order statistic, setting a cutoff utilizing the Bonferroni inequality. Hochberg's procedure, on the contrary, tests each partition hypothesis using each of the order statistics, using a set of cutoffs utilizing Simes’ inequality. It is natural to expect partition testing utilizing the joint distribution of the test statistics is sharper than partition testing based on probabilistic inequalities. Our results elucidate that, at least under the correlated Gaussian sequence model setup with many hypotheses, Holm's MTP and Hochberg's MTP do not have significantly different performances in that they both have asymptotic zero FWER and asymptotic zero power.

The Benjamini-Hochberg procedure has been one of the most studied MTP and has several desirable optimality properties \cite{Bogdan}, \cite{Guo}. It is astonishing to note that, among all the procedures studied in this paper, the BH method is the only one which can hold the FWER at a strictly positive level asymptotically under the equicorrelated normal setup. An interesting problem would be to study the limiting power of the Benjamini-Hochberg method.

Hommel's method is more rejective than Hochberg's MTP (and consequently, Holm's and Bonferroni's methods) \cite{Gou}. Yet, within our chosen asymptotic framework, this has asymptotic zero FWER and asymptotic zero power.

Finally, there are possible scopes of interesting extensions in several directions. One extension is to consider more general distributional setups. Another is to study the limiting behaviors of Hochberg, Hommel, and Benjamini-Hochberg procedures under general dependent normality. The primary tool in establishing universal asymptotic zero results for the step-down MTPs is Slepian's inequality which compares the quadrant probabilities of two normal random vectors. However, for the step-up procedures, the FWERs become functions of several order statistics. Hence we can not directly apply Slepian's inequality in these scenarios. Indeed, the authors in \cite{FDR2007} remark that it is challenging to deal with false discoveries in models with complicated dependence structures, e.g., in a multivariate Gaussian model with a general covariance matrix. It is also interesting to theoretically investigate whether similar asymptotic results hold for other classes of MTPs, e.g., the class of consonant procedures \cite{Westfall}. 


\section*{Acknowledgements}

The author sincerely acknowledges Prof. Subir Kumar Bhandari for his valuable and constructive suggestions during the planning and development of this work.


\vspace{4mm}

\noindent \textbf{Declarations of interest:} none.


\begin{thebibliography}{1}
\bibitem{Abramovich}   F. Abramovich, Y. Benjamini, D.L. Donoho,  and I. M. Johnstone, Adapting
to unknown sparsity by controlling the false discovery rate, Ann. Statist. 34 (2006) 584–653. 


\bibitem{BH} Y. Benjamini  and J. Hochberg, Controlling the false discovery rate. A practical and powerful approach to multiple testing, J. Roy. Statist. Soc. Ser. B 57 (1995) 289–300.

\bibitem{BL} Y. Benjamini and W. Liu, A distribution-free multiple test procedure that controls the false discovery rate. Technical Report. Department of
Statistics and Operations Research. Tel Aviv University. Technical report 1999 RP-SOR-99-3.

\bibitem{BL2} Y. Benjamini and W. Liu, A step-down multiple hypotheses testing procedure that controls the false discovery rate under independence. J. Statist. Plann. Inference. 82 (1999) 163-170.

\bibitem{BY} Y. Benjamini and D. Yekutieli, The control of the false discovery rate in multiple testing under dependency, Ann. Statist. 29 (2001) (4) 1165–1188. 

\bibitem{Bogdan} M. Bogdan, A. Chakrabarti, F. Frommlet, and J.K. Ghosh,  Asymptotic bayes-optimality under sparsity of some multiple testing procedures, Ann. Statist. 39 (2011) 1551-1579. 

\bibitem{Bonferroni} C. Bonferroni, Teoria statistica delle classi e calcolo delle probabilità, Pubbl. del R Ist. Super. di Sci. Econ.
e Commer. di Firenze 8 (1936) 3–62.

\bibitem{Cohen2009} A. Cohen, H.B. Sackrowitz, and M. Xu, A new multiple testing method in the dependent case, Ann. Statist. 37 (2009) 1528-1544.

\bibitem{dasbhandari} N. Das and S.K. Bhandari, Bound on FWER for correlated normal, Statist. Probab. Lett. 168 (2021) 108943. 

\bibitem{deybhandari} M. Dey and S.K. Bhandari, FWER goes to Zero for correlated normal, Statist. Probab. Lett. 193 (2023) 109700. 

\bibitem{deycstm} M. Dey, Behavior of FWER in Normal Distributions, Commun. Stat. (2022) DOI: 10.1080/03610926.2022.2150826.

\bibitem{Donoho} D. Donoho and J. Jin, Higher criticism for detecting sparse heterogeneous mixtures, Ann. Statist. 32 (2004) 962–994. 

\bibitem{r4} S. Dudoit and M.J. Laan, Multiple Testing Procedures with Applications to Genomics, Springer, 2008.

\bibitem{efron2007} B. Efron, Correlation and large-scale simultaneous significance testing, J. Amer. Statist. Assoc. 102 (2007) 93–103. 

\bibitem{efron2010} B. Efron, Correlated z-values and the accuracy of large-scale statistical estimates, J. Amer. Statist. Assoc. 105 (2010) 1042–1055. 

\bibitem{Efron2010book} B. Efron, Large-Scale Inference. Institute of Mathematical Statistics (IMS) Monographs 1. (2010) Cambridge Univ. Press, Cambridge. 

\bibitem{Fan2012} J. Fan, X. Han, and W. Gu, Estimating False Discovery Proportion Under Arbitrary Covariance Dependence, J. Amer. Statist. Assoc. 107 (2012) 1019–1035. 

\bibitem{Fan2017} J. Fan and X. Han, Estimation of the false discovery proportion with unknown dependence, J. R. Stat. Soc. Ser. B. Stat. Methodol. 79 (2017) (4) 1143–1164. 

\bibitem{Finner2001a} H. Finner and M. Roters, Asymptotic sharpness of product-type inequalities for
maxima of random variables with applications in multiple comparisons, J. Statist. Plann. Inference, 98 (2001) 127–144.

\bibitem{Finner2001b} H. Finner and M. Roters, On the false discovery rate and expected number of type I errors, Biom. J. 43 (2001) 985–1005.

\bibitem{Finner2002} H. Finner and M. Roters, Multiple hypotheses testing and expected number of type I errors, Ann. Statist. 30 (2002) 220–238. 

\bibitem{FDR2007} H. Finner, T. Dickhaus, and M. Roters, Dependency and false discovery rate: Asymptotics,
 Ann. Statist. 35(4) (2007) 1432-1455.




\bibitem{Gordon} A.Y. Gordon and P. Salzman, Optimality of the Holm procedure among general step-down multiple testing procedures, Statist. Probab. Lett. 78 (2008) 1878–1884. 

\bibitem{Gou} J. Gou, A.C. Tamhane, D. Xi, and D. Rom, A class of improved hybrid Hochberg–Hommel type step-up multiple test procedures, Biometrika, 101 (2014) 899-911.

\bibitem{Guo} W. Guo and M.B. Rao, On optimality of the Benjamini–Hochberg procedure for the false discovery rate, Statist. Probab. Lett. 78 (2008) 2024–2030.


\bibitem{Hochberg} Y. Hochberg, A sharper Bonferroni procedure for multiple tests of significance, Biometrika 75 (1988) 800–802. 

\bibitem{Holm} S. Holm, A simple sequentially rejective multiple test procedure, Scand. J. Statist. 6 (1979) 65–70. 

\bibitem{Hommel} G. Hommel, A stagewise rejective multiple test procedure based on a modified Bonferroni test, Biometrika 75 (1988) 383–386.

\bibitem{HuangHsu} Y. Huang and J. Hsu, Hochberg's Step-up Method: Cutting Corners off Holm's Step-down Method, Biometrika 94 (2007) 965-975.



\bibitem{LZP} J. Liu, C. Zhang, and D. Page, Multiple testing under dependence via graphical models, Ann. Appl. Stat. 10 (2016) 1699-1724. 



\bibitem{Qiu} X. Qiu, L. Klebanov, and  A. Yakovlev, Correlation between gene expression levels and limitations of the empirical Bayes methodology for finding differentially expressed genes, Stat. Appl. Genet. Mol. Biol. 4 (2005) 1-32. 

\bibitem{Sarkar2002} S.K. Sarkar, Some results on false discovery rate in stepwise multiple testing procedures, Ann. Statist. 30 (2002) 240–258.

\bibitem{Slepian} D. Slepian, The one-sided barrier problem for Gaussian noise, Bell System Tech. J., 41 (1962) 463-501. 

\bibitem{StoreyTib} J.D. Storey and R. Tibshirani, Statistical significance for genomewide studies, Proc. Natl. Acad. Sci. U.S.A, 100 (2003) (16) 9440–9445. 

\bibitem{suncai} W. Sun and T.T. Cai, Large-scale multiple testing under dependence, J. R. Stat. Soc. Ser. B. Stat. Methodol. 71 (2009) 393–424. 

\bibitem{Westfall} P.H. Westfall, R.D. Tobias, D. Rom, R.D. Wolfinger, and Y. Hochberg, Multiple Comparisons and Multiple Tests Using the SAS System,  Cary, North Carolina, USA: SAS Institute, 1999.


\end{thebibliography}
\end{document}